\theoremstyle{plain}
\newtheorem{theorem}{Theorem}[section]
\newtheorem{corollary}[theorem]{Corollary}
\newtheorem{lemma}[theorem]{Lemma}
\newtheorem{proposition}[theorem]{Proposition}
\theoremstyle{definition}
\theoremstyle{remark}
\newtheorem{remark}{Remark}[section]
\newtheorem{observation}[remark]{Observation}
\newcommand{\bbG}{\mathbb{G}}
\newcommand{\bbP}{\mathbb{P}}
\newcommand{\calA}{\mathcal{A}}
\newcommand{\calB}{\mathcal{B}}
\newcommand{\calE}{\mathcal{E}}
\newcommand{\calR}{\mathcal{R}}
\newcommand{\calU}{\mathcal{U}}
\newcommand{\calX}{\mathcal{X}}
\newcommand{\fg}{\mathfrak{g}}
\newcommand{\fh}{\mathfrak{h}}
\newcommand{\whcB}{\widehat{\mathcal{B}}}
\newcommand{\ol}{\overline}
\newcommand{\defn}[1]{{\bf \textcolor{blue}{#1}}}
\begin{document}

\title{Some Combinatorial Formulas Related to Diagonal Ramsey Numbers}
\author{Pakawut Jiradilok}
\date{\today}

\address{Department of Mathematics, Massachusetts Institute of Technology, Cambridge, MA 02139}
\email[P.~Jiradilok]{pakawut@mit.edu}

\begin{abstract}
We derive some combinatorial formulas related to the diagonal Ramsey numbers $R(k)$. Each formula is a statement of the form ``$F(n,k) = 0$ if and only if $n \ge R(k)$,'' where $F(n,k)$ is a combinatorial expression which depends on $n$ and $k$. Our work is closely related to a recent work by De~Loera and Wesley.
\end{abstract}

\subjclass{05D10 (primary); 05A15, 05C30, 05C65 (secondary)}
\keywords{algebraic formula, alternating sum, combinatorial formula, diagonal Ramsey number, Ramsey graph, trigonometric formula}

\maketitle

\section{Introduction}
\subsection{Diagonal Ramsey Numbers}
The $k^{\text{th}}$ diagonal Ramsey number $R(k)$ is the smallest positive integer $n$ with the property that for any coloring of the edges of the complete graph on $n$ vertices with two colors, there exists a monochromatic clique of size $k$ (see e.g.~\cite{Erd90, Rad21}). The existence of $R(k)$ (as a finite positive integer) is a consequence of a theorem of Ramsey's~\cite{Ram29}. The classical result of Erd\H{o}s and Szekeres~\cite{ES35} gives the upper bound
\begin{equation}\label{ineq:R-k-ES}
R(k) \le \binom{2k-2}{k-1},
\end{equation}
which is referred to as the {\em Erd\H{o}s--Szekeres bound} in the literature (see e.g.~\cite{Thom88, Sah23, CGMS23}).

There have been exciting developments on the diagonal Ramsey numbers in recent years. In 1986, V.~R\"{o}dl, in an unpublished manuscript (as reported by Andrew~Thomason in~\cite{Thom88}), obtained an improvement of the upper bound in~\eqref{ineq:R-k-ES} by a logarithmic factor. There is also a weaker bound which improves~\eqref{ineq:R-k-ES} by a double-logarithmic (``$\log\log$'') factor by Graham and R\"odl \cite{GR87}. In~\cite{Thom88}, Thomason obtained a polynomial improvement of~\eqref{ineq:R-k-ES}, which was later strengthened by further breakthrough works establishing superpolynomial improvements of~\eqref{ineq:R-k-ES} by Conlon~\cite{Con09} and by Sah~\cite{Sah23}.

Approximately nine decades after Erd\H{o}s and Szekeres'~\cite{ES35}, Campos, Griffiths, Morris, and Sahasrabudhe in their 2023 preprint~\cite{CGMS23} obtained, for the first time, an exponential improvement to~\eqref{ineq:R-k-ES}. They proved that there exists an absolute constant $\varepsilon > 0$ such that for all sufficiently large $k$, we have the upper bound $R(k) \le (4-\varepsilon)^k$.

For the lower bound, we have not seen much improvement since 1947. The classical result of Erd\H{o}s'~\cite{Erd47} shows $R(k,k) \ge (1+o(1)) \cdot (k/e) \cdot 2^{\frac{k-1}{2}}$. Spencer later improved this to $R(k,k) \ge (1+o(1)) \cdot (k/e) \cdot 2^{\frac{k+1}{2}}$ (see \cite{Spe75, Spe77}). To our knowledge, no significant improvements have happened afterwards.

\medskip

In the present work, we take an algebraic combinatorics approach to studying the diagonal Ramsey numbers. We derive four formulas, each of which is a statement ``$F(n,k) = 0$ if and only if $n \ge R(k)$,'' where $F(n,k)$ is a combinatorial expression. We refer to Section~\ref{sec:defns-main-results} for the descriptions of our main results. For our first two formulas (see Theorems~\ref{thm:cos-mult} and~\ref{thm:cos-incidence} below), the expression $F(n,k)$ is a combinatorial summation of trigonometric functions.

Our third formula (see Theorem~\ref{thm:P-n-k} and Corollary~\ref{cor:P-nk-1-2} below) gives a univariate integral polynomial $P_{n,k}(t) \in \mathbb{Z}[t]$ with the property that $P_{n,k}(1/2) = 0$ if and only if $n \ge R(k)$. Similarly, our fourth (see Theorem~\ref{thm:Q-n-k} and Corollary~\ref{cor:Q-n-k} below) gives $Q_{n,k}(t) \in \mathbb{Z}[t]$ with the property that $Q_{n,k}(1-2^{(k^2-k-2)/2}) = 0$ if and only if $n \ge R(k)$.

We remark that our first three formulas are applicable for $n$ and $k$ under certain number theoretic assumptions. For the first two formulas, we also establish a generalization (Theorem~\ref{thm:cos-alpha-beta}) which applies for all integers $n \ge k \ge 2$. This result, while more general, appears less elegant. There should also be a generalization of our third formula which removes the number theoretic assumption as well, but the outcome might be an inelegant complicated formula. We do not provide such a generalization in the present work. On the other hand, our fourth formula applies for all integers $n \ge k \ge 2$.

\subsection{Related Works}
Perhaps the most closely related to our work is the recent work by De~Loera and Wesley~\cite{DLW22} (see also~\cite{DLW23, Wes23}), in which Ramsey and other combinatorial numbers were investigated using algebraic tools including polynomial ideals and Nullstellensatz methods. We remark that their results are quite general: they are applicable to {\rm multicolor graph Ramsey numbers} $R(G_1, \ldots, G_k)$ and other ``Ramsey-type'' numbers (see~\cite{DLW22} for details).

Similar to our present work, \cite{DLW22} obtains an algebraic condition which holds if and only if $n \ge R(G_1, \ldots, G_k)$. In their case, the algebraic condition is the nonexistence of solutions a system of polynomial equations in $\ol{\mathbb{F}_2}$. For the diagonal Ramsey numbers, they also have an explicit combinatorial formula depending on $n$ and $k$ (\cite{DLW22} uses $n$ and $r$) which, if true, guarantees a lower bound $R(k) > n$.

Our arguments in the present work rely on creating and studying polynomials in the commutative variables associated to the edges of a graph which encode properties we desire. While this method is quite standard, from it we are able to obtain combinatorial formulas which appear to be new, and which display connections between diagonal Ramsey numbers and other combinatorial objects. Indeed, this technique was also used by De~Loera and Wesley~\cite{DLW22}, and it is related to many important ideas in combinatorics such as the polynomial method (see e.g.~\cite{Gut13, Gut16, Tao14, Zha23}) and the combinatorial Nullstellensatz~\cite{Alo99}.

On the algebraic side of graph theory, the tool we use of associating commutative variables to graphs has also been well used. For example, Pak and Postnikov~\cite{PP94} studied spanning trees by associating variables to the vertices of a graph. The polynomial they defined turns out to have a nice reciprocity property, which was investigated further by Huang and Postnikov~\cite{HP09} and by Hoey and Xiao~\cite{HX19}.

Finally, our current work is reminiscent of the famous work of Lagarias' \cite{Lag02}, in which an elementary problem in number theory is presented. Lagarias' Problem E---although {\em elementary} in the sense that it involves simply the sum-of-divisors function $\sigma(n)$, the harmonic number $H_n$, the exponentiation function, and the logarithm function---is equivalent to the Riemann Hypothesis. In our work, we present formulas which involve ``elementary'' trigonometric functions and combinatorial enumerations, and they turn out to encode the diagonal Ramsey numbers $R(k)$.

\subsection{Outline}
The rest of this paper is organized as follows. In Section~\ref{sec:defns-main-results}, we give some definitions and describe our main results. In Section~\ref{sec:general-algebraic}, we prove some algebraic lemmas which we use later in the paper. Section~\ref{s:proofs-cos-thms} gives a proof Theorem~\ref{thm:cos-alpha-beta}, which generalizes our first and second formulas. In Section~\ref{sec:third-formula}, we establish our third formula. In Section~\ref{s:proof-Q-n-k}, we prove the fourth formula. Section~\ref{sec:ideas-for-further-investigation} lists some ideas for further investigation.

\bigskip

\section{Formal Definitions and Main Results}\label{sec:defns-main-results}
\subsection{Preliminaries}
We use the usual notation $[n] := \{1, 2, \ldots, n\}$. If $S$ is a set and $k \ge 0$ is an integer, then $\binom{S}{k}$ denotes the set of all $k$-element subsets of $S$. When $S$ is a finite set, we use both $|S|$ and $\#S$ to denote its cardinality.

Throughout this paper, graphs are finite, simple, and undirected. For each graph $G$, we let ${\bf V}(G)$ and ${\bf E}(G)$ denote the vertex set and the edge set of $G$, respectively. For $n \in \mathbb{Z}_{\ge 1}$ and $p \in [0,1]$, we let $\bbG(n,p)$ denote the Erd\H{o}s--R\'{e}nyi random graph model with edge-probability parameter $p$ on the vertex set $[n]$. When $S$ is a finite set of indices, we let $K_S$ denote the complete graph, and $\ol{K_S}$ the empty graph, on the vertex set $S$. If $G$ is a graph, we let $\ol{G}$ denote the \defn{complement} of $G$: the graph $\ol{G}$ with ${\bf V}(\ol{G}) = {\bf V}(G)$ and ${\bf E}(\ol{G}) = \binom{{\bf V}(G)}{2} - {\bf E}(G)$.

For each positive integer $k \ge 2$, the \defn{$k^{\text{th}}$ diagonal Ramsey number} $R(k)$ is defined to be the least positive integer $n$ such that every graph $G$ on $n$ vertices contains either a clique of size $k$ or an independent set of size $k$ (see e.g.~\cite{Erd90, Rad21}). A graph $G$ is said to be \defn{$k$-Ramsey} (see e.g.~\cite{Alo90, DLW22, Sah23}), or simply \defn{Ramsey} when $k$ is understood, if it does not contain a clique of size $k$ and does not contain an independent set of size $k$. We use the terms ``independent set'' and ``anticlique'' interchangeably.

For each positive integer $k \ge 2$, a \defn{finite $k$-uniform hypergraph} $H$ is an ordered pair $(V,E)$, where the \defn{vertex set} $V = {\bf V}(H)$ is a finite set of vertices, and the \defn{edge set} $E = {\bf E}(H)$ is a subset of $\binom{[n]}{k}$. Throughout this paper, we only consider finite hypergraphs, and so we may drop the adjective ``finite'' when describing a hypergraph.

For any polynomial $P(t) \in \mathbb{C}[t]$ and for any nonnegative integer $m$, we use $[t^m] \, P(t) \in \mathbb{C}$ to denote the coefficient of the term $t^m$ in $P(t)$.

\subsection{Main Results}
We now describe our main results of the paper. Throughout this subsection, let us fix positive integers $n \ge k \ge 2$.

\subsubsection{First Formula}
For each subset $U \subseteq \binom{[n]}{k}$ and for each edge $e \in \binom{[n]}{2}$, we define the \defn{multiplicity} of $e$ in $U$ as
\[
{\tt mult}(e,U) := \# \left\{ S \in U \, : e \subseteq S \right\}.
\]
We have the following formula, which encodes diagonal Ramsey numbers in terms of an expression involving cosine functions.

\begin{theorem}\label{thm:cos-mult}
For positive integers $n \ge k \ge 3$ such that $\frac{(n-2)!}{k!(n-k)!} \in \mathbb{Z}$, we have
\[
\sum_{U \subseteq \binom{[n]}{k}} \prod_{e \in \binom{[n]}{2}} \cos\!\left( \frac{2\pi}{k(k-1)} \cdot {\tt mult}(e,U) \right) = 0
\]
if and only if $n \ge R(k)$.
\end{theorem}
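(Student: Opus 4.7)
The plan is to interpret the left-hand side as an average over sign-vectors on the edges of $K_n$ (which bijectively parametrize graphs on $[n]$) via Euler's formula, and then to recognize the resulting expression as a sum of non-negative reals that vanishes exactly when no Ramsey graph on $[n]$ exists. Let $\omega := e^{2\pi i/(k(k-1))}$, and for each sign-vector $\epsilon \in \{\pm 1\}^{\binom{[n]}{2}}$ write $f_\epsilon(S) := \sum_{e \subseteq S} \epsilon_e$. Expanding each $\cos$ via $2\cos\theta = e^{i\theta} + e^{-i\theta}$, swapping the order of the resulting sums over $U$ and $\epsilon$, and using the double-count identity $\sum_e \epsilon_e \cdot {\tt mult}(e,U) = \sum_{S \in U} f_\epsilon(S)$, the sum over $U \subseteq \binom{[n]}{k}$ factors to yield
\[
\mathrm{LHS} = \frac{1}{2^{\binom{n}{2}}} \sum_{\epsilon \in \{\pm 1\}^{\binom{[n]}{2}}} \prod_{S \in \binom{[n]}{k}} \bigl(1 + \omega^{f_\epsilon(S)}\bigr).
\]

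Next, I would put each factor in polar form via $1 + e^{i\theta} = 2\cos(\theta/2)\,e^{i\theta/2}$, so that
\[
\prod_S \bigl(1+\omega^{f_\epsilon(S)}\bigr) = 2^{\binom{n}{k}} \exp\!\Big(\tfrac{i\pi}{k(k-1)}\textstyle\sum_S f_\epsilon(S)\Big) \prod_S \cos\!\Big(\tfrac{\pi f_\epsilon(S)}{k(k-1)}\Big).
\]
A second double count gives $\sum_S f_\epsilon(S) = \binom{n-2}{k-2}\sum_e \epsilon_e$. A direct manipulation shows the hypothesis $\frac{(n-2)!}{k!(n-k)!} \in \bbZ$ is equivalent to $k(k-1)$ dividing $\binom{n-2}{k-2}$, so writing $M := \binom{n-2}{k-2}/(k(k-1))$ the exponential factor becomes $(-1)^{M\sum_e \epsilon_e}$. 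Since flipping any single edge sign changes $\sum_e \epsilon_e$ by $\pm 2$, the quantity $\sum_e \epsilon_e$ has the fixed parity $\binom{n}{2} \pmod 2$ for every $\epsilon$, so this phase collapses to a constant $c \in \{\pm 1\}$ depending only on $n$ and $k$. This is the key place where the integrality hypothesis is used.

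Finally, because $f_\epsilon(S)$ is a sum of $\binom{k}{2} = k(k-1)/2$ signs, we have $\pi f_\epsilon(S)/(k(k-1)) \in [-\pi/2, \pi/2]$, so every factor $\cos(\pi f_\epsilon(S)/(k(k-1)))$ is non-negative. It vanishes iff $f_\epsilon(S) = \pm k(k-1)/2$, iff all $\binom{k}{2}$ edges of the $k$-set $S$ carry the same sign, iff $S$ is a clique or an independent set in the graph $G_\epsilon := \{e : \epsilon_e = +1\}$. Combining the previous steps,
\[
\mathrm{LHS} = c \cdot 2^{\binom{n}{k}-\binom{n}{2}} \sum_{\epsilon} \prod_{S \in \binom{[n]}{k}} \cos\!\Big(\tfrac{\pi f_\epsilon(S)}{k(k-1)}\Big),
\]
which is a nonzero constant times a sum of non-negative reals. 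This sum vanishes iff every $\epsilon$ contributes zero, iff every graph on $[n]$ contains a $k$-clique or $k$-anticlique, iff $n \ge R(k)$. I do not foresee a substantial obstacle; the most delicate bookkeeping is in the polar-form step, where the integrality hypothesis must be used carefully to collapse the phase factor to an $\epsilon$-independent constant.
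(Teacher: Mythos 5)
Your proof is correct, and it is the same argument at heart but packaged in a genuinely cleaner way. The paper derives Theorem~\ref{thm:cos-mult} as a specialization of the more general Theorem~\ref{thm:cos-alpha-beta}: the forward direction starts from $n \ge R(k)$, shows that a certain product of exponential differences vanishes pointwise for each $\{0,1\}$-assignment, and then sums over assignments and performs the trigonometric manipulations to reach statements (ii)--(v); the reverse direction is obtained by reversing these manipulations and invoking the abstract positivity Lemma~\ref{l:sum-prod-f-A-S} with the function $f = \sin\!\bigl(\pi \sum_{e \subseteq S} x_e / \binom{k}{2}\bigr)$. You instead derive, without assuming anything about $R(k)$, the unconditional identity
\[
\mathrm{LHS} \;=\; c \cdot 2^{\binom{n}{k}-\binom{n}{2}} \sum_{\epsilon \in \{\pm 1\}^{\binom{[n]}{2}}} \prod_{S \in \binom{[n]}{k}} \cos\!\Bigl( \frac{\pi f_\epsilon(S)}{k(k-1)} \Bigr),
\]
with $c \in \{\pm 1\}$ a constant, from which both directions of the equivalence fall out at once by non-negativity. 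Under the change of variables $\epsilon_e = 2x_e - 1$ your $\cos\!\bigl(\pi f_\epsilon(S)/(k(k-1))\bigr)$ is precisely the paper's $\sin\!\bigl(\pi \sum_{e \subseteq S} x_e / \binom{k}{2}\bigr)$, so the key vanishing criterion --- a factor is zero if and only if $S$ induces a clique or anticlique in the graph determined by $\epsilon$ --- is identical, and the divisibility hypothesis is used in exactly the same role (to collapse the polar phase to an $\epsilon$-independent sign). What the paper's longer route buys is the extra generality of the free parameters $q, m$ in Theorem~\ref{thm:cos-alpha-beta}, which it then exploits to also prove Theorem~\ref{thm:cos-incidence} and the parameter-free versions (iv) and (v); your more direct route is preferable if one only wants Theorem~\ref{thm:cos-mult} itself.
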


\smallskip

\subsubsection{Second Formula}
Our second formula follows from our first formula above together with some trigonometric manipulations. To describe the second formula, we define some additional notation.

For a finite simple undirected graph $G$ and a $k$-uniform hypergraph $H$ both on the vertex set $[n]$, define the \defn{incidence number} of $G$ and $H$ as
\[
{\tt i}(G,H) := \#\left\{ (u,v) \in {\bf E}(G) \times {\bf E}(H) \, : \, u \subseteq v \right\}.
\]

Our second formula is another trigonometric formula for diagonal Ramsey numbers, under the same divisibility assumption as in Theorem~\ref{thm:cos-mult} above.

\begin{theorem}\label{thm:cos-incidence}
For positive integers $n \ge k \ge 3$ such that $\frac{(n-2)!}{k!(n-k)!} \in \mathbb{Z}$, we have
\[
\sum_{G, H} (-1)^{|{\bf E}(H)|} \cos \!\left( \frac{4\pi}{k(k-1)} \cdot {\tt i}(G,H) \right) = 0
\]
if and only if $n \ge R(k)$. Here, the summation is over all graphs $G$ and all $k$-uniform hypergraphs $H$ on the vertex set $[n]$.
\end{theorem}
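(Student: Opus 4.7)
The plan is to derive Theorem~\ref{thm:cos-incidence} directly from Theorem~\ref{thm:cos-mult} by expanding each cosine factor using $\cos\theta = \tfrac12(e^{i\theta}+e^{-i\theta})$, and then reinterpreting the resulting sum over sign patterns as a sum over graphs $G$.

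First I would write, for each $U\subseteq\binom{[n]}{2}$ and each edge $e\in\binom{[n]}{2}$,
\[
\cos\!\left(\frac{2\pi}{k(k-1)}\,{\tt mult}(e,U)\right)
=\frac{1}{2}\sum_{\epsilon_e\in\{\pm 1\}}\exp\!\left(\frac{2\pi i\,\epsilon_e}{k(k-1)}\,{\tt mult}(e,U)\right),
\]
and then expand the product over $e\in\binom{[n]}{2}$. Labeling each sign assignment $(\epsilon_e)_e$ by the graph $G$ whose edges are exactly those $e$ with $\epsilon_e=-1$, and identifying $U$ with the edge set ${\bf E}(H)$ of a $k$-uniform hypergraph $H$, this yields
\[
\prod_{e}\cos\!\left(\frac{2\pi}{k(k-1)}\,{\tt mult}(e,U)\right)
=\frac{1}{2^{\binom{n}{2}}}\sum_{G}\exp\!\left(\frac{2\pi i}{k(k-1)}\sum_{e}\epsilon_e\,{\tt mult}(e,{\bf E}(H))\right).
\]

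The second step is to simplify the exponent. By the double-counting identity $\sum_{e\in\binom{[n]}{2}}{\tt mult}(e,{\bf E}(H))=\binom{k}{2}|{\bf E}(H)|$ (each $S\in{\bf E}(H)$ contains $\binom{k}{2}$ edges), and by the definition
\[
\sum_{e\in{\bf E}(G)}{\tt mult}(e,{\bf E}(H))={\tt i}(G,H),
\]
I would compute $\sum_e\epsilon_e\,{\tt mult}(e,{\bf E}(H))=\binom{k}{2}|{\bf E}(H)|-2\,{\tt i}(G,H)$. Since $\frac{2\pi}{k(k-1)}\binom{k}{2}=\pi$, the first term contributes a global factor $e^{\pi i|{\bf E}(H)|}=(-1)^{|{\bf E}(H)|}$. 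Thus
\[
\prod_{e}\cos\!\left(\frac{2\pi}{k(k-1)}\,{\tt mult}(e,{\bf E}(H))\right)
=\frac{(-1)^{|{\bf E}(H)|}}{2^{\binom{n}{2}}}\sum_{G}\exp\!\left(-\frac{4\pi i}{k(k-1)}\,{\tt i}(G,H)\right).
\]

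Finally I would take real parts (the left-hand side is already real) and sum over $H$ (equivalently $U$) to obtain
\[
\sum_{U\subseteq\binom{[n]}{k}}\prod_{e}\cos\!\left(\frac{2\pi}{k(k-1)}\,{\tt mult}(e,U)\right)
=\frac{1}{2^{\binom{n}{2}}}\sum_{G,H}(-1)^{|{\bf E}(H)|}\cos\!\left(\frac{4\pi}{k(k-1)}\,{\tt i}(G,H)\right),
\]
so one side vanishes if and only if the other does. Invoking Theorem~\ref{thm:cos-mult} then completes the proof. There is no real obstacle here; the main care is in the bookkeeping of the identifications $U\leftrightarrow H$ and $(\epsilon_e)_e\leftrightarrow G$, and in checking that $\frac{2\pi}{k(k-1)}\binom{k}{2}$ is exactly $\pi$ so that the parity factor $(-1)^{|{\bf E}(H)|}$ emerges cleanly.
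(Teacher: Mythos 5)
Your proof is correct and uses the same key manipulation as the paper: the prosthaphaeresis (product-to-sum) expansion of $\prod_e \cos(\cdot)$, together with the double-counting identity $\sum_e {\tt mult}(e,U) = \binom{k}{2}|U|$, shows that the multiplicity-cosine sum equals $2^{-\binom{n}{2}}$ times the incidence-cosine sum. The paper packages this same calculation as the equivalence (ii) $\Leftrightarrow$ (iii) (and its specialization (iv) $\Leftrightarrow$ (v)) inside the more general Theorem~\ref{thm:cos-alpha-beta}, deriving both Theorem~\ref{thm:cos-mult} and Theorem~\ref{thm:cos-incidence} at $(q,m)=(-1/2,1)$ with the divisibility hypothesis absorbing the extra phase, whereas you derive Theorem~\ref{thm:cos-incidence} directly from Theorem~\ref{thm:cos-mult}; the underlying identity is identical.
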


We remark that for each $k \ge 3$, there are always infinitely many positive integers $n \ge k$ for which the divisibility assumption $\frac{(n-2)!}{k!(n-k)!} \in \mathbb{Z}$ holds. For instance, when $k = 5$, the fraction is an integer whenever $n$ (is at least $k = 5$ and) is congruent to one of $15$ different residue classes modulo $40$. In particular, $n = 43, 44, 48$ all satisfy this assumption for $k = 5$. Note that, at the time of writing, the best known bounds for $R(5)$ are $43 \le R(5) \le 48$ (see \cite{Exo89, AM18}; see also Radziszowski's dynamic survey \cite{Rad21}).

On the other hand, we also have a more general (but also more complicated) version of Theorems~\ref{thm:cos-mult} and~\ref{thm:cos-incidence}. In Section~\ref{s:proofs-cos-thms} below, we present a general result (Theorem~\ref{thm:cos-alpha-beta}) which does not require the divisibility assumption.

\smallskip

\subsubsection{Third Formula}\label{subsubsection: P-n-k-formula}
Let $\calA$ denote the set of assignments $\fg$ which associate to each $k$-element subset
\[
S \in \binom{[n]}{k}
\]
a finite simple undirected graph $\fg_S$ on the vertex set $S$ which is neither $K_S$ nor $\ol{K_S}$.

For each $\fg \in \calA$, we define the \defn{set of edges} of $\fg$ as
\[
{\tt Edges}(\fg) := \bigcup_{S \in \binom{[n]}{k}} {\bf E}(\fg_S) \subseteq \binom{[n]}{2},
\]
and define the \defn{sign} of $\fg$ as
\[
{\tt sign}(\fg) := \prod_{S \in \binom{[n]}{k}} (-1)^{|{\bf E}(\fg_S)|} \in \{-1,+1\}.
\]
The following polynomial is a signed enumeration of $\calA$ which keeps track of the cardinality of the set of edges:
\begin{equation}\label{eq:P-n-k-t-defn}
P_{n,k}(t) := \sum_{\fg \in \calA} {\tt sign}(\fg) \cdot t^{|{\tt Edges}(\fg)|} \in \mathbb{Z}[t].
\end{equation}

Observe that we may express $P_{n,k}(t)$ as
\[
P_{n,k}(t) = \sum_{r \ge 0} \left(A^+(r) - A^-(r) \right) t^r \in \mathbb{Z}[t],
\]
where, for each $r \in \mathbb{Z}_{\ge 0}$,
\[
A^+(r) := \#\left\{ \fg \in \calA \, | \, {\tt sign}(\fg) = +1 \, \text{ and } |{\tt Edges}(\fg)| = r \right\},
\]
and
\[
A^-(r) := \#\left\{ \fg \in \calA \, | \, {\tt sign}(\fg) = -1 \, \text{ and } |{\tt Edges}(\fg)| = r \right\}.
\]

The following result says that the univariate polynomials $P_{n,k}(t)$, when $k \equiv 2, 3 \pmod{4}$, ``encode'' the diagonal Ramsey number $R(k)$.

\begin{theorem}\label{thm:P-n-k}
Let $n \ge k \ge 2$ be positive integers such that $k$ is congruent to $2$ or $3$ modulo~$4$. Then
\[
\bbP \big( G \sim \bbG(n,1/2) \text{ is a } k\text{-Ramsey graph} \big) = (-1)^{\binom{n}{k}} \cdot P_{n,k}\!\left( \frac{1}{2} \right).
\]
\end{theorem}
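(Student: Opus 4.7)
The plan is to interpret $P_{n,k}(1/2)$ probabilistically. Since each edge of $G\sim\bbG(n,1/2)$ is present independently with probability $1/2$, for any edge set $E\subseteq\binom{[n]}{2}$ we have $2^{-|E|}=\bbP(E\subseteq{\bf E}(G))$. Because ${\tt Edges}(\fg)\subseteq{\bf E}(G)$ holds if and only if $\fg_S$ is an edge-subgraph of $G[S]$ for every $S\in\binom{[n]}{k}$, evaluating the definition of $P_{n,k}$ at $t=1/2$ and pulling the probability inside the sum gives
\[
P_{n,k}(1/2)\;=\;\bbE_G\!\left[\,\sum_{\fg\in\calA}\prod_{S\in\binom{[n]}{k}}(-1)^{|{\bf E}(\fg_S)|}\,\mathbf{1}[\fg_S\subseteq G[S]]\,\right].
\]
Since $\calA$ is a Cartesian product in which each coordinate $\fg_S$ is chosen independently from the set of graphs on $S$ other than $K_S$ and $\ol{K_S}$, the inner sum factors over $S$, yielding $P_{n,k}(1/2)=\bbE_G\bigl[\prod_S f_S(G)\bigr]$ where $f_S(G):=\sum_{\fh\neq K_S,\ol{K_S}}(-1)^{|{\bf E}(\fh)|}\mathbf{1}[\fh\subseteq G[S]]$.

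Next I evaluate $f_S(G)$ by a short inclusion--exclusion. The unrestricted sum over all subgraphs $\fh$ of $G[S]$ telescopes by sign cancellation, $\sum_{\fh\subseteq G[S]}(-1)^{|{\bf E}(\fh)|}=(1-1)^{|{\bf E}(G[S])|}$, which equals $1$ when $G[S]=\ol{K_S}$ and $0$ otherwise. Subtracting the two excluded monochromatic terms (the empty graph contributes $+1$ unconditionally, and $K_S$ contributes $(-1)^{\binom{k}{2}}\mathbf{1}[G[S]=K_S]$) produces
\[
f_S(G)\;=\;\mathbf{1}[G[S]=\ol{K_S}]\;-\;1\;-\;(-1)^{\binom{k}{2}}\mathbf{1}[G[S]=K_S].
\]

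This is the step where the parity hypothesis enters. When $k\equiv 2,3\pmod 4$ one has $\binom{k}{2}$ odd, so $(-1)^{\binom{k}{2}}=-1$, and a case check on whether $G[S]$ is empty, complete, or neither collapses the three-term formula to $f_S(G)=-\mathbf{1}[G[S]\text{ is not monochromatic}]$. Multiplying over all $S\in\binom{[n]}{k}$ and taking the expectation in $G$ then yields $P_{n,k}(1/2)=(-1)^{\binom{n}{k}}\bbP(G\text{ is }k\text{-Ramsey})$, and rearranging gives the claimed identity. The main substantive point is the combination of the $f_S(G)$ computation with the parity condition: without $k\equiv 2,3\pmod 4$ the two monochromatic corrections would fail to cancel into $-\mathbf{1}[G[S]\text{ not mono}]$, and the product over $S$ would no longer reduce cleanly to the Ramsey indicator, so an analogous statement for the other residues of $k$ would require a more elaborate adjustment of $P_{n,k}$.
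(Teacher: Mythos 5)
Your proof is correct and rests on the same algebraic core as the paper's: the signed sum over non-monochromatic subgraphs of each $G[S]$ (i.e.\ $\sum_{\varnothing \subsetneq \calU \subsetneq \binom{S}{2}}(-1)^{|\calU|}x_\calU$, which is $\pm\bigl(1-\prod_{e}x_e-\prod_e(1-x_e)\bigr)$), together with the observation that $\binom{k}{2}$ being odd forces this to collapse to $-\mathbf{1}[G[S]\text{ not monochromatic}]$. You run the argument in the opposite direction from the paper---starting at $P_{n,k}(1/2)$, reading it as an expectation over $\bbG(n,1/2)$, factoring over $S$, and evaluating each factor, rather than starting from the Ramsey indicator, summing over all $2^{\binom{n}{2}}$ graphs, and expanding to reach $P_{n,k}$---but these are direct rearrangements of the same computation.
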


We prove Theorem~\ref{thm:P-n-k} in Subsection~\ref{subsection:proof-thm-P-n-k}. From the theorem, we have the following corollary.

\begin{corollary}\label{cor:P-nk-1-2}
Let $n$ and $k$ be as in the above theorem. The following are equivalent.
\begin{itemize}
    \item[(i)] $P_{n,k}$ vanishes at $1/2$.
    \item[(ii)] $P_{n,k}$ is the zero polynomial.
    \item[(iii)] $n \ge R(k)$.
\end{itemize}
\end{corollary}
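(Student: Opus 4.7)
The plan is to derive $(i) \Leftrightarrow (iii)$ directly from Theorem~\ref{thm:P-n-k}, observe that $(ii) \Rightarrow (i)$ is trivial, and prove the nontrivial direction $(iii) \Rightarrow (ii)$ by revisiting the identity behind Theorem~\ref{thm:P-n-k}. For $(i) \Leftrightarrow (iii)$: since $\bbG(n, 1/2)$ assigns positive probability to every labeled graph on $[n]$, the probability on the left-hand side of Theorem~\ref{thm:P-n-k} vanishes if and only if no $k$-Ramsey graph on $n$ vertices exists, which is equivalent to $n \ge R(k)$.

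The key ingredient for $(iii) \Rightarrow (ii)$ is a pointwise identity that I expect the proof of Theorem~\ref{thm:P-n-k} already establishes, and that I would record explicitly at the start of the corollary's proof:
\[
\mathbf{1}[G \text{ is } k\text{-Ramsey}] \;=\; (-1)^{\binom{n}{k}} \sum_{\fg \in \calA} {\tt sign}(\fg) \cdot \mathbf{1}\!\left[{\tt Edges}(\fg) \subseteq {\bf E}(G)\right],
\]
valid for every graph $G$ on $[n]$. To derive it, express, for each $S \in \binom{[n]}{k}$, the local condition that $G[S]$ is neither $K_S$ nor $\ol{K_S}$ via
\[
\mathbf{1}\!\left[G[S] \notin \{K_S, \ol{K_S}\}\right] = 1 - \prod_{e \in \binom{S}{2}} x_e - \prod_{e \in \binom{S}{2}} (1 - x_e)
\]
with $x_e = \mathbf{1}[e \in {\bf E}(G)]$, expand, and observe that the two monomials supported on the full edge set $\binom{S}{2}$ cancel precisely because $\binom{k}{2}$ is odd under the hypothesis $k \equiv 2, 3 \pmod{4}$. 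What remains is a signed sum over graphs $\fg_S$ on $S$ with $\fg_S \ne K_S, \ol{K_S}$. Multiplying across $S$ gives the pointwise identity, and taking expectation under $\bbG(n,1/2)$ recovers Theorem~\ref{thm:P-n-k}.

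Now assume $n \ge R(k)$. Then the left-hand side of the pointwise identity vanishes for every graph $G$, so, applying it to the graph with edge set $F$, one obtains $\sum_{\fg \in \calA \,:\, {\tt Edges}(\fg) \subseteq F} {\tt sign}(\fg) = 0$ for every $F \subseteq \binom{[n]}{2}$. Möbius inversion on the Boolean lattice of subsets of $\binom{[n]}{2}$ (equivalently, a short induction on $|F|$) then gives $\sum_{\fg \,:\, {\tt Edges}(\fg) = F} {\tt sign}(\fg) = 0$ for every $F$; multiplying by $t^{|F|}$ and summing over $F$ yields $P_{n,k}(t) \equiv 0$, which is $(ii)$. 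The main obstacle is largely bookkeeping: making the pointwise identity above explicit, since Theorem~\ref{thm:P-n-k} as stated only records its expectation at $t = 1/2$. Once that is in hand, Möbius inversion closes the argument with no further combinatorial input.
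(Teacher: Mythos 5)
Your proof is correct and follows essentially the same route as the paper's: it rests on the pointwise identity expressing the $k$-Ramsey indicator as a signed sum over assignments $\fg \in \calA$ (the paper's Observation~\ref{obs:enum-form-x-0-1} together with Equation~\eqref{eq:Ramsey-sum-x-calU}), and then concludes that every coefficient $\sum_{\fg : {\tt Edges}(\fg) = F} {\tt sign}(\fg)$ vanishes. The only cosmetic difference is that the paper packages that last step as its standalone Lemma~\ref{l:F-sum-a-U-prod-zero} (a multilinear polynomial vanishing on $\{0,1\}^I$ is identically zero), whereas you carry out the equivalent M\"obius inversion / induction on $|F|$ in place.
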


We remark that as a result of Corollary~\ref{cor:P-nk-1-2}, for $n, k$ as in the corollary, we only need to check that $P_{n,k}(1/2) = 0$ to obtain $n \ge R(k)$. On the other hand, if we know $n \ge R(k)$, then we have that $P_{n,k}$ is zero as a polynomial, and so it vanishes everywhere not just at $1/2$. We prove Corollary~\ref{cor:P-nk-1-2} in Subsection~\ref{subsection:proof-cor-P-n-k}.

We discuss further properties of the polynomials $P_{n,k}(t)$ in Subsection~\ref{subsection:remarks-P-n-k}.

\smallskip

\subsubsection{Fourth Formula}\label{subsubsection: Q-n-k-formula}
Let $\calB$ denote the set of assignments $\fh$ which associate to each $k$-element subset
\[
S \in \binom{[n]}{k}
\]
a finite simple undirected graph $\fh_S$ on the vertex set $S$ such that
\begin{itemize}
    \item[(i)] for every $S \in \binom{[n]}{k}$, the size $|{\bf E}(\fh_S)|$ is an even nonnegative integer, and
    \item[(ii)] for every edge $e \in \binom{[n]}{2}$, the number of $S \in \binom{[n]}{k}$ for which $e \in {\bf E}(\fh_S)$ is an even nonnegative integer.
\end{itemize}

For each $\fh \in \calB$, define the \defn{empty locus} of $\fh$ to be
\begin{equation}\label{eq:tt-Empty-fh}
{\tt Empty}(\fh) := \left\{ S \in \binom{[n]}{k} \, : \, \fh_S \text{ is an empty graph} \right\}.
\end{equation}

The following polynomial is an enumeration of $\calB$ which keeps track of the cardinality of the empty locus:
\begin{equation}\label{eq:Q-n-k-t-defn}
Q_{n,k}(t) := \sum_{\fh \in \calB} t^{|{\tt Empty}(\fh)|} \in \mathbb{Z}[t].
\end{equation}

The following result says that the polynomials $Q_{n,k}$ also ``encode'' the diagonal Ramsey numbers $R(k)$. Note that, unlike the three formulas above, this theorem does not have a number theoretic restriction on $n$ and $k$. Hence, it applies to all integers $n \ge k \ge 2$.

\begin{theorem}\label{thm:Q-n-k}
Let $n \ge k \ge 2$ be positive integers. Then
\[
\bbP \big( G \sim \bbG(n,1/2) \text{ is a } k\text{-Ramsey graph} \big) = (-1)^{\binom{n}{k}} 2^{\left( 1 - \binom{k}{2} \right)\binom{n}{k}} \cdot Q_{n,k}\!\left( 1 - 2^{\binom{k}{2} - 1} \right).
\]
\end{theorem}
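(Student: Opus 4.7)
The plan is to express the Ramsey probability as an expectation over $\pm 1$ variables $z_e := (-1)^{x_e}$, where $x_e \in \{0,1\}$ indicates whether $e \in {\bf E}(G)$, and then match the resulting Fourier expansion term-by-term with the sum defining $Q_{n,k}(1 - 2^{\binom{k}{2}-1})$. Writing $N := \binom{k}{2}$, the probability in question equals
\[
\bbE_z \prod_{S \in \binom{[n]}{k}} \bigl(1 - \mathbf{1}\bigl[G|_S \text{ is monochromatic}\bigr]\bigr).
\]
The key Fourier-type identity (in the spirit of the lemmas in Section~\ref{sec:general-algebraic}) is
\[
\mathbf{1}\bigl[G|_S \text{ is monochromatic}\bigr] \;=\; \prod_{e \subseteq S} x_e + \prod_{e \subseteq S}(1-x_e) \;=\; \frac{1}{2^{N-1}} \sum_{\substack{T \subseteq \binom{S}{2} \\ |T| \text{ even}}} \prod_{e \in T} z_e,
\]
which follows from the substitution $x_e = (1-z_e)/2$ together with the cancellation of odd-weight contributions. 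Each factor in the product above then becomes $\frac{1}{2^{N-1}}\bigl((2^{N-1}-1) - \sum_{T_S \ne \emptyset,\,|T_S| \text{ even}} \prod_{e \in T_S} z_e\bigr)$.

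Next I would expand the product over $S \in \binom{[n]}{k}$, indexing each term by the subset $W \subseteq \binom{[n]}{k}$ of factors contributing their ``non-constant'' part, together with a choice, for each $S \in W$, of a nonempty even-sized $T_S \subseteq \binom{S}{2}$. Each such choice yields a monomial $\prod_e z_e^{m_e}$ with $m_e = \#\{S \in W : e \in T_S\}$, and taking the expectation $\bbE_z$ kills every monomial having some odd exponent. Hence the surviving data consist of a subset $W$ together with graphs $\fh_S$ on $S$ (for $S \in W$) whose edge counts are even and nonzero and which jointly cover each edge $e \in \binom{[n]}{2}$ an even number of times. Extending by $\fh_S := \ol{K_S}$ for $S \notin W$ promotes this to a bijection with elements of $\calB$ satisfying ${\tt Empty}(\fh) = \binom{[n]}{k} \setminus W$.

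The remainder is sign and power tracking: the prefactor $(-1)^{|W|}(2^{N-1}-1)^{\binom{n}{k}-|W|}$ from the expansion, combined with the identity $|W| = \binom{n}{k} - |{\tt Empty}(\fh)|$, collapses into $(-1)^{\binom{n}{k}}(1-2^{N-1})^{|{\tt Empty}(\fh)|}$ after pulling out the global sign, and multiplying by the outer $1/2^{(N-1)\binom{n}{k}}$ yields exactly the claimed $(-1)^{\binom{n}{k}} 2^{(1-\binom{k}{2})\binom{n}{k}} Q_{n,k}(1 - 2^{\binom{k}{2}-1})$. I expect the main obstacle to be the clean identification in the second step of the two parity conditions defining $\calB$---namely that $|{\bf E}(\fh_S)|$ is even and that $\#\{S : e \in {\bf E}(\fh_S)\}$ is even for each $e$---with the two mechanisms in the expansion: even $|T_S|$ coming from the Fourier identity on each $S$, and even $m_e$ coming from the $z_e$-expectation. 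Once that dictionary is in hand, the result follows by routine manipulation.
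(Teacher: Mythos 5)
Your proposal is correct and follows essentially the same route as the paper: you pass to $\pm1$ variables ($z_e = -y_e$ up to a harmless sign), expand the monochromatic indicator on each $S$ into even-degree monomials, observe that the expectation (equivalently, the sum over all $\pm1$ assignments) annihilates any monomial with an odd exponent, thereby restricting to $\calB$, and then track the $\tau = 1 - 2^{\binom{k}{2}-1}$ bookkeeping. The only differences are cosmetic---you work with the probability/expectation directly instead of the raw count, and you phrase the restriction step as a Fourier-expectation argument rather than the paper's explicit $\sum_{\bf y}$---but the ingredients and structure are the same.
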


We have the following immediate corollary.

\begin{corollary}\label{cor:Q-n-k}
Let $n \ge k \ge 2$ be integers. Then $1 - 2^{\binom{k}{2} - 1}$ is a root of $Q_{n,k}$ if and only if $n \ge R(k)$.
\end{corollary}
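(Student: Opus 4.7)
The plan is to derive the corollary as an immediate consequence of Theorem~\ref{thm:Q-n-k}. By the very definition of the diagonal Ramsey number, the inequality $n \ge R(k)$ holds if and only if there is no $k$-Ramsey graph on $n$ vertices. Since the Erd\H{o}s--R\'{e}nyi measure $\bbG(n,1/2)$ assigns positive mass $2^{-\binom{n}{2}}$ to every individual graph on $[n]$, this nonexistence is in turn equivalent to
\[
\bbP\bigl( G \sim \bbG(n,1/2) \text{ is a } k\text{-Ramsey graph}\bigr) = 0.
\]

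Next, I would invoke Theorem~\ref{thm:Q-n-k} to rewrite the left-hand side above as
\[
(-1)^{\binom{n}{k}} \cdot 2^{(1-\binom{k}{2})\binom{n}{k}} \cdot Q_{n,k}\!\left(1 - 2^{\binom{k}{2}-1}\right).
\]
The prefactor $(-1)^{\binom{n}{k}} \cdot 2^{(1-\binom{k}{2})\binom{n}{k}}$ is a signed power of $2$, hence nonzero for all integers $n \ge k \ge 2$. Therefore the displayed probability vanishes if and only if $Q_{n,k}$ vanishes at $1 - 2^{\binom{k}{2}-1}$. Chaining the two equivalences yields the corollary.

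There is essentially no obstacle here: the entire substance of the statement is already packaged into Theorem~\ref{thm:Q-n-k}, and the corollary is a one-line deduction once that theorem is available. The only point one needs to record explicitly is that the scalar prefactor never vanishes, which is automatic since it is (up to a sign) a positive power of $2$. Compared to Corollary~\ref{cor:P-nk-1-2}, there is no analogue here of the ``(i) $\Leftrightarrow$ (ii)'' clause asserting that the relevant polynomial is identically zero; the corollary only asserts vanishing at the single point $1 - 2^{\binom{k}{2}-1}$, which is precisely what the theorem directly delivers.
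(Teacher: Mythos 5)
Your argument is correct and matches the paper's intent: the paper simply labels the corollary ``immediate'' from Theorem~\ref{thm:Q-n-k}, and you have supplied exactly the one-line chain of equivalences (no $k$-Ramsey graph on $[n]$ $\Leftrightarrow$ the $\bbG(n,1/2)$-probability is zero $\Leftrightarrow$ $Q_{n,k}$ vanishes at $1-2^{\binom{k}{2}-1}$, using that the scalar prefactor is a nonzero signed power of $2$). Your closing remark comparing with Corollary~\ref{cor:P-nk-1-2} is also apt: since $Q_{n,k}$ has nonnegative integer coefficients, it is never the zero polynomial, so one could not hope for an analogue of clause (ii) there.
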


We prove Theorem~\ref{thm:Q-n-k} in Section~\ref{s:proof-Q-n-k}.

\bigskip

\section{General Algebraic Framework}\label{sec:general-algebraic}
In this section, we give a general framework for our arguments. Throughout this section, we let $\calR$ be a commutative ring with additive identity $0$ and multiplicative identity $1$. Let $P \subseteq \calR$ be a subset with the following properties:
\begin{itemize}
    \item $0 \notin P$,
    \item if $x,y \in P$, then $x+y \in P$, and
    \item if $x,y \in P$, then $xy \in P$.
\end{itemize}

Let $k \ge 2$ be a positive integer, and we denote by $\mathcal{A}_k$ the set of $\{0,1\}$-square $k \times k$ symmetric hollow matrices. (A square matrix is said to be \defn{hollow} if all its diagonal entries are zero.) In other words, $\mathcal{A}_k$ is the set of all possible adjacency matrices of graphs on $k$ vertices. Let $f: \mathcal{A}_k \to \calR$ be a function with the following properties.
\begin{itemize}
    \item if $M$ is the adjacency matrix of a clique or an empty graph, then $f(M) = 0$, and
    \item if $M$ is not, then $f(M) \in P$.
\end{itemize}

\begin{lemma}\label{l:sum-prod-f-A-S}
Let $n \ge k \ge 2$ be positive integers. Let $f: \mathcal{A}_k \to \calR$ be as above. For each $S \in \binom{[n]}{k}$ and for each $n \times n$ matrix $A$, let us write $A_S$ for the $k \times k$ submatrix of $A$ with row and column indices from $S \subseteq [n]$ (with row and column orderings induced from $[n]$). Then
\begin{equation}\label{eq:sum-prod-f-A-S}
\sum_{A \in \mathcal{A}_n} \prod_{S \in \binom{[n]}{k}} f(A_S) = 0
\end{equation}
if and only if $n \ge R(k)$.
\end{lemma}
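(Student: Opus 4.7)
The plan is to interpret the sum combinatorially via the correspondence between $\mathcal{A}_n$ and graphs on $[n]$, and then exploit the closure properties of $P$ as a kind of ``positive cone'' inside $\calR$.

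First, I would identify each $A \in \mathcal{A}_n$ with a graph $G = G(A)$ on the vertex set $[n]$: the adjacency matrix $A$ records the edges of $G$, and for $S \in \binom{[n]}{k}$, the submatrix $A_S$ is precisely the adjacency matrix of the induced subgraph $G[S]$. The crucial observation is that, by the defining properties of $f$, we have $f(A_S) = 0$ exactly when $G[S]$ is a clique or an independent set, and $f(A_S) \in P$ otherwise. Consequently, for a fixed $A$, the product $\prod_{S \in \binom{[n]}{k}} f(A_S)$ vanishes unless every $k$-subset $S$ induces a subgraph that is neither $K_S$ nor $\ol{K_S}$, i.e., unless $G(A)$ is a $k$-Ramsey graph.

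Now split into two cases based on whether $n \ge R(k)$. If $n \ge R(k)$, then by definition of $R(k)$ there is no $k$-Ramsey graph on $n$ vertices, so every term in the sum on the left-hand side of~\eqref{eq:sum-prod-f-A-S} is zero, giving the desired equality. Conversely, if $n < R(k)$, then there exists at least one $k$-Ramsey graph $G_0$ on $n$ vertices, and the sum reduces to
\[
\sum_{A \in \mathcal{A}_n} \prod_{S \in \binom{[n]}{k}} f(A_S) = \sum_{\substack{A \in \mathcal{A}_n \\ G(A) \text{ is } k\text{-Ramsey}}} \prod_{S \in \binom{[n]}{k}} f(A_S).
\]
For each $A$ appearing in the right-hand sum, every factor $f(A_S)$ lies in $P$, so by closure of $P$ under multiplication the product lies in $P$; then by closure of $P$ under addition, and the fact that the sum is nonempty (it contains the term corresponding to $G_0$), the entire sum lies in $P$. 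Since $0 \notin P$ by hypothesis, the sum is nonzero.

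The argument is essentially a formal manipulation, and the only subtle point (the ``main obstacle'') is recognizing why the axioms on $P$ are chosen as they are: the closure under sums and products together with $0 \notin P$ is exactly what is needed to rule out cancellations in the nonempty case. The only step that requires the full strength of the hypothesis $n < R(k)$ is the existence of at least one $k$-Ramsey graph on $n$ vertices, which follows immediately from the definition of $R(k)$.
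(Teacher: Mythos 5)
Your proof is correct and follows essentially the same approach as the paper: in both directions you use the correspondence between $\mathcal{A}_n$ and graphs on $[n]$, observe that the product over $S$ vanishes exactly when $G(A)$ fails to be $k$-Ramsey, and in the case $n < R(k)$ exploit the closure properties of $P$ (under sums and products) together with $0 \notin P$ to conclude the sum is nonzero. The only difference is that you are slightly more explicit than the paper in spelling out the role each axiom of $P$ plays, which is a clarity improvement but not a different argument.
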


\begin{proof}
If $n \ge R(k)$, then for every graph $G$ on the vertex set $[n]$, there exists $S \in \binom{[n]}{k}$ such that the induced subgraph of $G$ on the vertex set $S$ is either a clique or an anticlique. Thus for any $A \in \mathcal{A}_n$, there is $S \in \binom{[n]}{k}$ such that $f(A_S) = 0$. Hence the product in~\eqref{eq:sum-prod-f-A-S} is zero, for every $A \in \mathcal{A}_n$.

If $n < R(k)$, there exists at least one graph $G$ on the vertex set $[n]$ without any $k$-clique or $k$-anticlique. In this case, the product in~\eqref{eq:sum-prod-f-A-S} is an element in $P$. Hence, the sum in~\eqref{eq:sum-prod-f-A-S} is a sum of a positive integer number of elements in $P$, and zeros. Thus, the sum is nonzero.
\end{proof}

For the next lemma, we let $I$ be a finite index set. Consider the set
\[
\calX := \{X_i \, : \, i \in I\}
\]
of commutative variables, and the polynomial ring $\calR[\calX]$. (If $I = \varnothing$, then this ring is simply $\calR$ itself.) Let $\{0,1\}^I$ denote the set of assignments ${\bf x} \in \{0,1\}^I$ which associate
\[
{\bf x}: i \mapsto x_i \in \{0,1\}
\]
for every $i \in I$.

\begin{lemma}\label{l:F-sum-a-U-prod-zero}
Let $a_\calU \in \calR$, for each subset $\calU \subseteq I$. Consider the polynomial
\[
F = \sum_{\calU \subseteq I} a_\calU \prod_{i \in U} X_i \in \calR[\calX].
\]
Suppose that for every ${\bf x} \in \{0,1\}^I$,
\[
F\big|_{\forall i \in I, X_i = x_i} = 0.
\]
Then $F$ is the zero polynomial.
\end{lemma}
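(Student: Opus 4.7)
The plan is to show directly that every coefficient $a_\calU$ must vanish, exploiting multilinearity: each variable $X_i$ appears to degree at most $1$ in $F$, and the hypothesis gives $|\{0,1\}^I| = 2^{|I|}$ linear constraints, matching exactly the number of coefficients. I would carry this out by induction on the cardinality $|I|$. The base case $|I| = 0$ is immediate: then $F = a_\varnothing \in \calR$, and the single hypothesis $F = 0$ forces $a_\varnothing = 0$.

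For the inductive step, pick any $j \in I$ and split
\[
F = G + X_j \cdot H,
\]
where $G := \sum_{\calU \subseteq I,\, j \notin \calU} a_\calU \prod_{i \in \calU} X_i$ and $H := \sum_{\calU \subseteq I,\, j \in \calU} a_\calU \prod_{i \in \calU \setminus \{j\}} X_i$ are polynomials in the variables $\{X_i : i \in I \setminus \{j\}\}$. For each assignment $\mathbf{x}' \in \{0,1\}^{I \setminus \{j\}}$, extending it by $x_j = 0$ yields an assignment in $\{0,1\}^I$ under which $F$ evaluates to $G(\mathbf{x}')$; by hypothesis, $G$ vanishes on the entire Boolean hypercube over $I \setminus \{j\}$, so the inductive hypothesis gives $G \equiv 0$ in $\calR[\{X_i : i \neq j\}]$. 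Extending $\mathbf{x}'$ instead by $x_j = 1$ and using $G \equiv 0$ shows that $H$ also vanishes on $\{0,1\}^{I \setminus \{j\}}$, so again by induction $H \equiv 0$. Hence $F = G + X_j \cdot H$ is the zero polynomial.

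An equivalent way, avoiding induction, is Möbius inversion on the Boolean lattice: for $\calV \subseteq I$, let $\mathbf{1}_\calV \in \{0,1\}^I$ be its indicator, and observe that $F\big|_{\mathbf{1}_\calV} = \sum_{\calU \subseteq \calV} a_\calU$, so
\[
a_\calU \;=\; \sum_{\calV \subseteq \calU} (-1)^{|\calU| - |\calV|} \, F\big|_{\mathbf{1}_\calV}
\]
for every $\calU \subseteq I$, and the right-hand side is $0$ under the hypothesis.

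I do not anticipate a serious obstacle; the main thing to be careful about is that $\calR$ is only assumed to be a commutative ring, not a field or even an integral domain. Both arguments above respect this: the induction uses only substitution and additive cancellation in $\calR$, and the Möbius formula uses only integer coefficients $\pm 1$, so no division is ever invoked.
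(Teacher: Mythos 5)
Your inductive argument coincides with the paper's own proof: both choose a distinguished index (you call it $j$, the paper calls it $\iota$), split $F$ as the sum of a part free of $X_j$ plus $X_j$ times a remainder, and evaluate at $x_j = 0$ and then $x_j = 1$ to reduce to the smaller index set $I \setminus \{j\}$. Your Möbius-inversion remark is a clean, induction-free alternative; since its coefficients are $\pm 1$ it also holds over an arbitrary commutative ring, and it makes explicit the intuition you opened with, that the $2^{|I|}$ Boolean evaluations determine the $2^{|I|}$ coefficients.
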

\begin{proof}
We induct on $|I|$. The statement is clear when $I = \varnothing$. Now suppose $I \neq \varnothing$. Take an arbitrary element $\iota \in I$. For each $\calU \subseteq I$, let us use the shorthand
\[
X_\calU := \prod_{i \in \calU} X_i.
\]

Suppose
\[
F = \sum_{\calU \subseteq I} a_\calU X_\calU \in \calR[\calX]
\]
is a polynomial that vanishes at every point ${\bf x} \in \{0,1\}^I$.

Write
\[
F = \sum_{\calU \subseteq I - \{\iota\}} a_\calU X_\calU + \sum_{\calU \subseteq I - \{\iota\}} a_{\calU \cup \{\iota\}} X_\calU X_\iota.
\]
By considering when $X_{\iota} = 0$ and when $X_{\iota} = 1$, we find that both polynomials
\[
\sum_{\calU \subseteq I - \{\iota\}} a_\calU X_\calU
\]
and
\[
\sum_{\calU \subseteq I - \{\iota\}} a_{\calU \cup \{\iota\}} X_\calU
\]
vanish at every point in $\{0,1\}^{I - \{\iota\}}$. By the inductive hypothesis, we conclude that $a_\calU = a_{\calU \cup \{\iota\}} = 0$ for every $\calU \subseteq I - \{\iota\}$.
\end{proof}

\bigskip

\section{The first and second formulas}\label{s:proofs-cos-thms}
The goal of this section is to prove the following theorem.

\begin{theorem}\label{thm:cos-alpha-beta}
Let $n \ge k \ge 2$ be positive integers. The following are equivalent.
\begin{itemize}
    \item[(i)] $n \ge R(k)$.
    \item[(ii)] For any real number $q \in \mathbb{R}$ and for any integer $m \in \mathbb{Z}$, we have
    \[
    \sum_{U \subseteq \binom{[n]}{k}} (-1)^{(m + 1)|U|} \prod_{e \in \binom{[n]}{2}} \cos\!\left( \frac{2\pi}{k(k-1)} \left( q \binom{n-2}{k-2} + m \cdot {\tt mult}(e,U) \right) \right) = 0.
    \]
    \item[(iii)] For any real number $q \in \mathbb{R}$ and for any integer $m \in \mathbb{Z}$, we have
    \[
    \sum_{G,H} (-1)^{|{\bf E}(H)|} \cos\!\left( \frac{\pi q (n-2)!}{k!(n-k)!} \cdot \Big(4|{\bf E}(G)| - n(n-1)\Big) + \frac{4\pi m}{k(k-1)} \cdot {\tt i}(G,H) \right) = 0,
    \]
    where the summation is over all graphs $G$ and all $k$-uniform hypergraphs $H$ on the vertex set $[n]$.
    \item[(iv)] The following equality holds.
    \[
    \sum_{U \subseteq \binom{[n]}{k}} \prod_{e \in \binom{[n]}{2}} \cos\!\left( - \frac{\pi \cdot (n-2)!}{k! (n-k)!} + \frac{2\pi}{k(k-1)} \cdot {\tt mult}(e,U) \right) = 0.
    \]
    \item[(v)] The following equality holds.
    \[
    \sum_{G,H} (-1)^{|{\bf E}(H)|} \cos\!\left( - \frac{\pi (n-2)!}{k! (n-k)!} \left( 2|{\bf E}(G)| - \binom{n}{2}\right) + \frac{4\pi}{k(k-1)} \cdot {\tt i}(G,H) \right) = 0,
    \]
    where the summation is over all graphs $G$ and all $k$-uniform hypergraphs $H$ on the vertex set $[n]$.
\end{itemize}
\end{theorem}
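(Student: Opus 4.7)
My plan is to apply the framework of Lemma~\ref{l:sum-prod-f-A-S} with $\calR = \mathbb{R}$, $P = \mathbb{R}_{>0}$, and $f \colon \calA_k \to \mathbb{R}$ defined by $f(M) = \sin(\alpha E(M))$, where $\alpha := \frac{2\pi}{k(k-1)}$ and $E(M)$ counts edges. Since $\alpha \binom{k}{2} = \pi$, the value $\sin(\alpha E)$ vanishes exactly for $E \in \{0, \binom{k}{2}\}$ and is strictly positive in between, so Lemma~\ref{l:sum-prod-f-A-S} yields
\[
\sum_{A \in \calA_n} \prod_{S \in \binom{[n]}{k}} \sin(\alpha E_S(A)) = 0 \iff n \ge R(k),
\]
where $E_S(A)$ is the number of edges in the induced subgraph of $A$ on $S$. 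This single identity will serve as the hub to which (ii)--(v) are connected.

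To pass from this to (iv), I would expand $\sin(x) = \frac{1}{2i}(e^{ix} - e^{-ix})$, distribute the product over $S$ to obtain a signed sum indexed by subsets $U \subseteq \binom{[n]}{k}$ (recording which branch is taken for each $S$), and exchange the resulting sum with the one over $A$. The incidence identity $\sum_{S \in U} E_S(A) = \sum_e {\tt mult}(e,U)\,\mathbf{1}[e \in A]$ makes the $A$-sum factor across edges as $\prod_e \bigl(1 + e^{i\alpha(2\,{\tt mult}(e,U) - \binom{n-2}{k-2})}\bigr)$; applying $1 + e^{i\theta} = 2\cos(\theta/2)\,e^{i\theta/2}$ and collapsing phases via $\alpha\binom{k}{2} = \pi$ and $\binom{n}{2}\binom{n-2}{k-2} = \binom{n}{k}\binom{k}{2}$ should deliver exactly $2^{\binom{n}{2} - \binom{n}{k}}$ times the LHS of~(iv), giving (i)~$\Leftrightarrow$~(iv).

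For (i)~$\Rightarrow$~(ii) I would rerun the same Fourier computation with $f_m(M) := \sin(\alpha m E(M))$ and an extra weight $e^{-i\alpha q \binom{n-2}{k-2}(2|E(A)| - \binom{n}{2})}$ inserted before summing over $A$. The crucial observation is that $\sin(\alpha m E)$ still vanishes at $E = 0$ and $E = \binom{k}{2}$ for every integer $m$, even when $\gcd(m, \binom{k}{2}) > 1$ introduces extra zeros in between; the first half of Lemma~\ref{l:sum-prod-f-A-S} (which uses only the zero-locus condition, not positivity) therefore still applies, forcing $\prod_S \sin(\alpha m E_S(A)) = 0$ for every graph $A$ once $n \ge R(k)$. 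Since (iv) is (ii) at $q = -\tfrac{1}{2},\ m = 1$, we automatically get (ii)~$\Rightarrow$~(iv), closing the cycle (i)~$\Rightarrow$~(ii)~$\Rightarrow$~(iv)~$\Rightarrow$~(i).

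The remaining equivalences (ii)~$\Leftrightarrow$~(iii) and (iv)~$\Leftrightarrow$~(v) I expect to come from a dual Fourier expansion: write each factor in $\prod_e \cos(\theta_e)$ as $\frac{1}{2}(e^{i\theta_e} + e^{-i\theta_e})$, and identify the resulting sum over sign sequences $\epsilon \in \{\pm 1\}^{\binom{[n]}{2}}$ with the sum over graphs $G$ on $[n]$ via $\epsilon_e = 1 \Leftrightarrow e \in G$. Under this identification, $\sum_e \epsilon_e\,{\tt mult}(e,U) = 2\,{\tt i}(G, H) - |E(H)|\binom{k}{2}$, and the outer sign $(-1)^{(m+1)|U|}$ combines with the phase $e^{-i\alpha m|U|\binom{k}{2}} = (-1)^{m|U|}$ to collapse to the universal factor $(-1)^{|E(H)|}$ appearing in (iii) and (v). I expect the main obstacle to be not conceptual but purely bookkeeping: every step is an elementary trigonometric identity, yet keeping track of the cumulative factors $(-2i)^{\binom{n}{k}}$, $i^{\binom{n}{k}}$, and the three distinct powers $(-1)^{|U|}$, $(-1)^{m|U|}$, $(-1)^{(m+1)|U|}$ so that they combine into exactly the prefactors displayed in (ii)--(v) is the step that demands care.
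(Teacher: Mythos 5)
Your proposal is correct and follows essentially the same argument as the paper: the paper works with the expression $\prod_S\bigl(e^{2\alpha(q+m)iE_S}-e^{2\alpha q iE_S}\bigr)$ rather than naming the sine function up front, but this is the same object up to a weight $e^{i(2q+m)\alpha E_S}$, and the exponential expansion over $U$, the per-edge factorization with $1+e^{i\theta}=2\cos(\theta/2)e^{i\theta/2}$, the prosthaphaeresis step for (ii)~$\Leftrightarrow$~(iii) and (iv)~$\Leftrightarrow$~(v), and the invocation of Lemma~\ref{l:sum-prod-f-A-S} via positivity of $\sin$ on $(0,\pi)$ for the converse direction are all the same. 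The weight you wrote down needs a linear reparametrization of $q$ before it literally reproduces the argument of the cosine in~(ii), but since $q$ ranges over all of $\mathbb{R}$ and $\cos$ is even, this is a harmless bookkeeping adjustment of the kind you already anticipated.
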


Before proving Theorem~\ref{thm:cos-alpha-beta}, let us quickly note how to obtain Theorems~\ref{thm:cos-mult} and \ref{thm:cos-incidence} from Theorem~\ref{thm:cos-alpha-beta}.

\begin{proof}[Proof of Theorems~\ref{thm:cos-mult} and \ref{thm:cos-incidence}]
Suppose that
\[
a := \frac{(n-2)!}{k!(n-k)!} \in \mathbb{Z}.
\]
Then the expression on the left-hand side of~(iv) in Theorem~\ref{thm:cos-alpha-beta} can be written as
\begin{align*}
&\sum_{U \subseteq \binom{[n]}{k}} \prod_{e \in \binom{[n]}{2}} \cos\!\left( - \frac{\pi \cdot (n-2)!}{k! (n-k)!} + \frac{2\pi}{k(k-1)} \cdot {\tt mult}(e,U) \right) \\
&= (-1)^{a \cdot n(n-1)/2} \sum_{U \subseteq \binom{[n]}{k}} \prod_{e \in \binom{[n]}{2}} \cos\!\left( \frac{2\pi}{k(k-1)} \cdot {\tt mult}(e,U) \right).
\end{align*}
Hence, Theorem~\ref{thm:cos-mult} follows from (i) $\Leftrightarrow$ (iv).

\smallskip

Similarly, the expression on the left-hand side of~(v) in Theorem~\ref{thm:cos-alpha-beta} can be written as
\begin{align*}
& \sum_{G,H} (-1)^{|{\bf E}(H)|} \cos\!\left( - \frac{\pi (n-2)!}{k! (n-k)!} \left( 2|{\bf E}(G)| - \binom{n}{2}\right) + \frac{4\pi}{k(k-1)} \cdot {\tt i}(G,H) \right) \\
&= (-1)^{a \cdot n(n-1)/2} \sum_{G,H} (-1)^{|{\bf E}(H)|} \cos\!\left( \frac{4\pi}{k(k-1)} \cdot {\tt i}(G,H) \right).
\end{align*}
Hence, Theorem~\ref{thm:cos-incidence} follows from (i) $\Leftrightarrow$ (v).
\end{proof}

Now we prove Theorem~\ref{thm:cos-alpha-beta}.
\begin{proof}[Proof of Theorem~\ref{thm:cos-alpha-beta}]
{\bf (i) $\Rightarrow$ (ii).} Suppose that $n \ge R(k)$. Let $q \in \mathbb{R}$ and $m \in \mathbb{Z}$. Denote by $\{0,1\}^{\binom{[n]}{2}}$ the set of all assignments ${\bf x} \in \{0,1\}^{\binom{[n]}{2}}$ which associate
\[
{\bf x}: e \mapsto x_e \in \{0,1\},
\]
for every edge $e \in \binom{[n]}{2}$. Using the usual one-to-one correspondence between $\{0,1\}^{\binom{[n]}{2}}$ and the graphs on the vertex set $[n]$ and the assumption $n \ge R(k)$, we see that for any ${\bf x}$ there exists a $k$-element subset $S \in \binom{[n]}{k}$ such that either
\begin{itemize}
    \item $\forall e \in \binom{S}{2}, x_e = 0$, or
    \item $\forall e \in \binom{S}{2}, x_e = 1$.
\end{itemize}
For such a subset $S \in \binom{[n]}{k}$, we see that
\[
\exp\!\bigg( \frac{4\pi(q+m) i}{k(k-1)} \sum_{e \in \binom{S}{2}} x_e \bigg) - \exp\!\bigg( \frac{4\pi q i}{k(k-1)} \sum_{e \in \binom{S}{2}} x_e\bigg) = 0.
\]
Therefore, for any ${\bf x} \in \{0,1\}^{\binom{[n]}{2}}$, we obtain
\[
\prod_{S \in \binom{[n]}{k}} \left( \exp\!\bigg( \frac{4\pi(q+m) i}{k(k-1)} \sum_{e \in \binom{S}{2}} x_e \bigg) - \exp\!\bigg( \frac{4\pi q i}{k(k-1)} \sum_{e \in \binom{S}{2}} x_e\bigg) \right) = 0.
\]
Expanding the product, we find
\[
\sum_{U \subseteq \binom{[n]}{k}} \exp\!\bigg( \frac{4\pi(q+m)i}{k(k-1)} \sum_{S \in U} \sum_{e \in \binom{S}{2}} x_e \bigg)(-1)^{\binom{n}{k} - |U|} \exp\!\bigg( \frac{4\pi qi}{k(k-1)} \sum_{S \in \binom{[n]}{k} - U} \sum_{e \in \binom{S}{2}} x_e \bigg) = 0.
\]
Hence,
\begin{equation}\label{eq:sum-U-q-binom-m-mult}
\sum_{U \subseteq \binom{[n]}{k}} (-1)^{|U|} \exp\!\left( \frac{4\pi i}{k(k-1)} \sum_{e \in \binom{[n]}{2}} x_e \left( q \binom{n-2}{k-2} + m \cdot {\tt mult}(e,U) \right) \right) = 0.
\end{equation}

Recall that Equation~\eqref{eq:sum-U-q-binom-m-mult} holds for any ${\bf x} \in \{0,1\}^{\binom{[n]}{2}}$. Therefore, we can sum the left-hand side of~\eqref{eq:sum-U-q-binom-m-mult} over all ${\bf x} \in \{0,1\}^{\binom{[n]}{2}}$ to obtain
\[
\sum_{{\bf x}} \sum_{U \subseteq \binom{[n]}{k}} (-1)^{|U|} \exp\!\left( \frac{4\pi i}{k(k-1)} \sum_{e \in \binom{[n]}{2}} x_e \left( q \binom{n-2}{k-2} + m \cdot {\tt mult}(e,U) \right) \right) = 0,
\]
and thus
\[
\sum_{U \subseteq \binom{[n]}{k}} (-1)^{|U|} \prod_{e \in \binom{[n]}{2}} \left( 1 +  \exp\!\left( \frac{4\pi i}{k(k-1)} \left( q \binom{n-2}{k-2} + m \cdot {\tt mult}(e,U) \right) \right) \right) = 0.
\]
Using the identity $1+\exp(2i \theta) = 2 \cos \theta \cdot \exp(i \theta)$ and dividing through by $2^{\binom{n}{2}}$, we obtain
\begin{equation}\label{eq:sum-U-T1-T2-0}
\sum_{U \subseteq \binom{[n]}{k}} (-1)^{|U|} \cdot T_1(U) \cdot T_2(U) = 0,
\end{equation}
where
\begin{equation}\label{eq:T1-U-defn}
T_1(U) := \prod_{e \in \binom{[n]}{2}} \cos \!\left( \frac{2\pi}{k(k-1)} \left( q \binom{n-2}{k-2} + m \cdot {\tt mult}(e,U) \right) \right),
\end{equation}
and
\[
T_2(U) := \prod_{e \in \binom{[n]}{2}} \exp \!\left( \frac{2\pi i}{k(k-1)} \left( q \binom{n-2}{k-2} + m \cdot {\tt mult}(e,U) \right) \right).
\]
By noticing that
\[
\sum_{e \in \binom{[n]}{2}} {\tt mult}(e,U) = \sum_{e \in \binom{[n]}{2}} \sum_{\substack{s \in U \\ e \subseteq S}} 1 = \sum_{S \in U} \sum_{e \subseteq S} 1 = |U| \cdot \binom{k}{2},
\]
we find that
\begin{equation}\label{eq:T2-U-1-q-n-k-1-m-U}
T_2(U) = \exp\!\left( \pi i \cdot q \binom{n}{k} + \pi i \cdot m |U| \right) = (-1)^{q \binom{n}{k}} (-1)^{m |U|}.
\end{equation}
Combining~\eqref{eq:sum-U-T1-T2-0}, \eqref{eq:T1-U-defn}, and \eqref{eq:T2-U-1-q-n-k-1-m-U} yields (ii), as desired.

\medskip

{\bf (ii) $\Leftrightarrow$ (iii).} Let us define
\[
T_3 := \sum_{U \subseteq \binom{[n]}{k}} (-1)^{(m + 1)|U|} \prod_{e \in \binom{[n]}{2}} \cos\!\left( \frac{2\pi}{k(k-1)} \left( q \binom{n-2}{k-2} + m \cdot {\tt mult}(e,U) \right) \right),
\]
and
\[
T_4 := \sum_{G,H} (-1)^{|{\bf E}(H)|} \cos\!\left( \frac{\pi q (n-2)!}{k!(n-k)!} \cdot \Big(4|{\bf E}(G)| - n(n-1)\Big) + \frac{4\pi m}{k(k-1)} \cdot {\tt i}(G,H) \right).
\]
Let us denote by $\{-1,1\}^{\binom{[n]}{2}}$ the set of all assignments $\varepsilon_{\bullet} \in \{-1,1\}^{\binom{[n]}{2}}$ which associate
\[
\varepsilon_{\bullet}: e \mapsto \varepsilon_e \in \{-1,1\},
\]
for every edge $e \in \binom{[n]}{2}$. Using the multivariable generalization of the identity $\cos \theta \cos \eta = \frac{1}{2}\left( \cos(\theta + \eta) + \cos(\theta - \eta) \right)$ (which is called a {\em prosthaphaeresis} formula, among other names---see e.g.~\cite{Tho88}), we write
\begin{equation}\label{eq:T3-eps-bullet}
T_3 = \sum_{U \subseteq \binom{[n]}{k}} (-1)^{(m+1)|U|} \cdot 2^{-\binom{n}{2}} \sum_{\varepsilon_\bullet} \cos\!\left( \frac{2\pi}{k(k-1)} \cdot A_1(U,\varepsilon_\bullet) \right),
\end{equation}
where the inner summation is over all $\varepsilon_\bullet \in \{-1,1\}^{\binom{[n]}{2}}$, and
\begin{equation}\label{eq:A1-U-eps-defn}
A_1(U,\varepsilon_\bullet) := \sum_{e \in \binom{[n]}{2}} \varepsilon_e \left( q \binom{n-2}{k-2} + m \cdot {\tt mult}(e,U) \right).
\end{equation}

By writing
\[
W = W(\varepsilon_\bullet) := \left\{ e \in \binom{[n]}{2} \, : \, \varepsilon_e = +1 \right\},
\]
we express
\begin{align}
A_1(U, \varepsilon_\bullet) &= 2 \sum_{e \in W} \left( q \binom{n-2}{k-2} + m \cdot {\tt mult}(e,U) \right) - \sum_{e \in \binom{[n]}{2}} \left( q \binom{n-2}{k-2} + m \cdot {\tt mult}(e,U) \right) \notag \\
&= 2q \binom{n-2}{k-2} |W| + 2m \cdot {\tt mult}(W,U) - q \binom{n-2}{k-2} \binom{n}{2} - m |U| \binom{k}{2}, \label{eq:A1-U-eps-2q-binom}
\end{align}
where
\[
{\tt mult}(W,U) := \sum_{e \in W} {\tt mult}(e,U).
\]

Combining \eqref{eq:T3-eps-bullet}, \eqref{eq:A1-U-eps-defn}, and \eqref{eq:A1-U-eps-2q-binom}, we find
\begin{align*}
&2^{\binom{n}{2}} T_3 \\
&= \sum_{U \subseteq \binom{[n]}{k}} \sum_{W \subseteq \binom{[n]}{2}} (-1)^{|U|} \cos\!\left( \frac{\pi q (n-2)!}{k! (n-k)!} (4|W| - n(n-1)) + \frac{4\pi m}{k(k-1)} \cdot {\tt mult}(W,U) \right).
\end{align*}
By thinking of $U$ and $W$ as the sets of edges of a $k$-uniform hypergraph $H$ and a graph $G$ on the vertex set $[n]$, respectively, and noting that
\[
{\tt mult}(W,U) = {\tt i}(G,H),
\]
we obtain
\[
2^{\binom{n}{2}} T_3 = T_4.
\]
Hence, the equivalence between (ii) and (iii) follows.

\medskip

{\bf (ii) $\Rightarrow$ (iv).} This follows by simply plugging in $q = -1/2$ and $m = 1$.

\medskip

{\bf (iv) $\Leftrightarrow$ (v).} We have shown above that $2^{\binom{n}{2}} T_3 = T_4$, and hence
\[
2^{\binom{n}{2}} T_3\Big|_{(q,m) = (-1/2,1)} = T_4\Big|_{(q,m) = (-1/2,1)}.
\]

{\bf (v) $\Rightarrow$ (i).} With the same trigonometric manipulations as before, we find that if
\[
T_4\Big|_{(q,m) = (-1/2,1)} = 0,
\]
then
\[
\sum_{{\bf x}} \prod_{S \in \binom{[n]}{k}} \left( \exp\!\bigg( \frac{4\pi(q+m) i}{k(k-1)} \sum_{e \in \binom{S}{2}} x_e \bigg) - \exp\!\bigg( \frac{4\pi q i}{k(k-1)} \sum_{e \in \binom{S}{2}} x_e\bigg) \right) \Bigg|_{(q,m) = (-1/2,1)} = 0,
\]
which means
\[
\sum_{{\bf x}} \prod_{S \in \binom{[n]}{k}} \sin\!\left( \frac{\pi}{\binom{k}{2}} \sum_{e \in \binom{S}{2}} x_e \right) = 0.
\]

We finish by invoking Lemma~\ref{l:sum-prod-f-A-S} with the function
\[
f\!\left( {\bf x}|_S \right) = \sin\!\left( \frac{\pi}{\binom{k}{2}} \sum_{e \in \binom{S}{2}} x_e \right),
\]
the ring $\calR = \mathbb{C}$, and the subset $P = \mathbb{R}_{> 0}$. Here, we identify the set $\mathcal{A}_n$ of adjacency matrices with $\{0,1\}^{\binom{[n]}{2}}$, and the notation ${\bf x}|_S$ refers to the restriction of ${\bf x}$ to $\binom{S}{2}$, which we identify with a $k \times k$ adjacency matrix.
\end{proof}

\bigskip

\section{The third formula}\label{sec:third-formula}
In this section, we prove Theorem~\ref{thm:P-n-k} and Corollary~\ref{cor:P-nk-1-2}. Throughout this section, we fix positive integers $n \ge k \ge 2$ such that $k \equiv 2 \text{ or } 3 \pmod{4}$.

\subsection{Proof of Theorem~\ref{thm:P-n-k}}\label{subsection:proof-thm-P-n-k}
As in the previous section, we continue to think of a graph on the vertex set $[n]$ as an assignment ${\bf x} \in \{0,1\}^{\binom{[n]}{2}}$, which associates
\[
{\bf x}: e \mapsto x_e \in \{0,1\}
\]
to every edge $e \in \binom{[n]}{2}$. As we did earlier, for each $S \in \binom{[n]}{k}$, let us denote by ${\bf x}|_S$ the restriction of ${\bf x}$ to $\binom{S}{2}$.

The main idea of the proof is to apply Lemma~\ref{l:sum-prod-f-A-S} with the function
\begin{equation}\label{eq:f-1-prod-prod}
f\!\left( {\bf x}|_S \right) = 1 - \prod_{e \in \binom{S}{2}} x_e - \prod_{e \in \binom{S}{2}} (1-x_e),
\end{equation}
the ring $\calR = \mathbb{Z}$, and the subset $P = \mathbb{Z}_{\ge 1}$.

We make the following observation about plugging the function $f$ from~\eqref{eq:f-1-prod-prod} into the equation in Lemma~\ref{l:sum-prod-f-A-S}.

\begin{observation}\label{obs:enum-form-x-0-1}
While Lemma~\ref{l:sum-prod-f-A-S} implies that
\[
\sum_{{\bf x}} \prod_{S \in \binom{[n]}{k}} f({\bf x}|_S) = 0
\]
if and only if $n \ge R(k)$, we can say something more about this function. Suppose that ${\bf x} \in \{0,1\}^{\binom{[n]}{2}}$ is given. This means we have a graph $G$ on the vertex set $[n]$. For each $S \in \binom{[n]}{k}$, let us denote by $G_S$ the induced subgraph of $G$ on the vertex set $S$.

We note that $f({\bf x}|_S)$ equals $0$ if and only if $G_S$ is either a clique or an anticlique; otherwise, $f({\bf x}|_S)$ equals $1$. Thus, the product
\[
\prod_{S \in \binom{[n]}{k}} f({\bf x}|_S)
\]
equals $1$ if and only if $G$ is a $k$-Ramsey graph; otherwise, the product equals zero.

Therefore, the summation
\[
\sum_{{\bf x}} \prod_{S \in \binom{[n]}{k}} f({\bf x}|_S),
\]
over all ${\bf x} \in \{0,1\}^{\binom{[n]}{2}}$, is an {\em enumeration} formula for the number of $k$-Ramsey graphs on the vertex set $[n]$.
\end{observation}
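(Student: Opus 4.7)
The plan is to verify the two assertions packaged into the observation by a direct case analysis of the three summands defining $f({\bf x}|_S)$ in~\eqref{eq:f-1-prod-prod}, followed by a routine indicator-function argument for the product and the sum. Nothing combinatorial beyond the definitions is needed; the work is in checking that the polynomial $f$ exactly encodes the indicator I want it to.

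For the first assertion, I would fix $S \in \binom{[n]}{k}$ and examine
\[
f({\bf x}|_S) \;=\; 1 \;-\; \prod_{e \in \binom{S}{2}} x_e \;-\; \prod_{e \in \binom{S}{2}} (1 - x_e)
\]
as a function on $\{0,1\}^{\binom{S}{2}}$. The first product equals $1$ exactly when $x_e = 1$ for every $e \in \binom{S}{2}$, i.e.\ when $G_S = K_S$, and equals $0$ otherwise. Symmetrically, the second product equals $1$ exactly when $G_S = \ol{K_S}$, and equals $0$ otherwise. Because $k \ge 2$ forces $\binom{S}{2} \ne \varnothing$, these two "$1$" cases are mutually exclusive, so at most one of the subtracted terms is nonzero, and $f({\bf x}|_S)$ takes only the values $0$ and $1$. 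Assembling the three cases gives $f({\bf x}|_S) = 0$ precisely when $G_S \in \{K_S, \ol{K_S}\}$ and $f({\bf x}|_S) = 1$ in every other case.

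For the second assertion, the product $\prod_{S \in \binom{[n]}{k}} f({\bf x}|_S)$ is a product of $\{0,1\}$-valued factors, hence $\{0,1\}$-valued, and equals $1$ if and only if every factor equals $1$. By the first step, this happens exactly when no $k$-subset $S$ induces a clique or anticlique in the graph $G$ encoded by ${\bf x}$, which is precisely the condition that $G$ is $k$-Ramsey. Thus $\prod_S f({\bf x}|_S)$ is the indicator of the event "$G$ is $k$-Ramsey" on $\{0,1\}^{\binom{[n]}{2}}$, and summing over all ${\bf x}$ yields the cardinality of the set of $k$-Ramsey graphs on the vertex set $[n]$.

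There is no real obstacle in this proof: the statement is a direct unpacking of~\eqref{eq:f-1-prod-prod} once one recognizes the two subtracted monomials as the complementary clique and anticlique indicators on $\binom{S}{2}$. The point of the observation is transitional rather than technical; it upgrades the algebraic vanishing criterion of Lemma~\ref{l:sum-prod-f-A-S} to a genuine enumeration, which sets the stage for expanding $\prod_S \bigl(1 - \prod_e x_e - \prod_e (1-x_e)\bigr)$, summing over ${\bf x}$, and recognizing the resulting signed sum as $2^{\binom{n}{2}}$ times $P_{n,k}(1/2)$ (up to the sign $(-1)^{\binom{n}{k}}$) in the subsequent proof of Theorem~\ref{thm:P-n-k}.
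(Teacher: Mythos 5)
Your proof is correct and follows the same reasoning the paper uses: interpret the two subtracted products in $f({\bf x}|_S)$ as the clique and anticlique indicators on $\binom{S}{2}$, conclude $f$ is $\{0,1\}$-valued with the claimed vanishing set, and pass to the product and sum as indicator and enumeration respectively. The extra note that the two indicators are mutually exclusive (since $\binom{S}{2} \neq \varnothing$) is a small piece of bookkeeping the paper leaves implicit, but the argument is the same.
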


The assumption $k \equiv 2 \text{ or } 3 \pmod{4}$ implies that the coefficient of the highest-degree monomial $\prod_{e \in \binom{S}{2}} x_e =: x_{\binom{S}{2}}$ in the product
\[
\prod_{e \in \binom{S}{2}} (1 - x_e)
\]
is $-1$. This means there is cancellation of the term $x_{\binom{S}{2}}$ in the expression on the right-hand side of Equation~\eqref{eq:f-1-prod-prod}. More precisely,
\begin{equation}\label{eq:f-x-S-sum-U-1-U}
f({\bf x}|_S) = \sum_{\varnothing \subsetneq \calU \subsetneq \binom{S}{2}} (-1)^{|\calU|+1} x_{\calU},
\end{equation}
where, for each subset $T \subseteq \binom{[n]}{2}$, we define
\[
x_T := \prod_{e \in T} x_e.
\]

Observation~\ref{obs:enum-form-x-0-1} then implies that
\begin{equation}\label{eq:Ramsey-sum-x-calU}
\#\left\{ k\text{-Ramsey graphs on } [n] \right\} = \sum_{{\bf x}} \prod_{S \in \binom{[n]}{k}} \sum_{\varnothing \subsetneq \calU \subsetneq \binom{S}{2}} (-1)^{|\calU|+1} x_{\calU}.
\end{equation}

Now we recall the definition of the set $\mathcal{A}$ from Subsubsection~\ref{subsubsection: P-n-k-formula}. Expanding the product on the right-hand side of Equation~\eqref{eq:Ramsey-sum-x-calU}, together with noting that $x_e^2 = x_e$ for every $e \in \binom{[n]}{2}$, gives us
\begin{align*}
\#\left\{ k\text{-Ramsey graphs on } [n] \right\} &= \sum_{{\bf x}} (-1)^{\binom{n}{k}} \sum_{\fg \in \calA} {\tt sign}(\fg) \cdot x_{{\tt Edges}(\fg)} \\
&= (-1)^{\binom{n}{k}} \sum_{\fg \in \calA} {\tt sign}(\fg) \sum_{{\bf x}} x_{{\tt Edges}(\fg)}.
\end{align*}
For each subset $T \subseteq \binom{[n]}{2}$, we note that $x_T$ equals $1$ if and only if for every edge $e \in T$, $x_e = 1$; otherwise, $x_T$ equals $0$. This shows that
\[
\sum_{{\bf x}} x_T = 2^{\binom{n}{2} - |T|}.
\]
Using this in the expression above, we find
\[
\#\left\{ k\text{-Ramsey graphs on } [n] \right\} = (-1)^{\binom{n}{k}} 2^{\binom{n}{2}} \sum_{\fg \in \calA} {\tt sign}(\fg) \left( \frac{1}{2} \right)^{|{\tt Edges}(\fg)|},
\]
from which it follows that the probability that a uniformly random graph on $[n]$ is $k$-Ramsey is $(-1)^{\binom{n}{k}} \cdot P_{n,k}(1/2)$, as desired.

\subsection{Proof of Corollary~\ref{cor:P-nk-1-2}}\label{subsection:proof-cor-P-n-k}
Let us now show that the three items in the corollary are equivalent.

\medskip

{\bf (i) $\Rightarrow$ (iii).} From Theorem~\ref{thm:P-n-k}, if $P_{n,k}$ vanishes at $1/2$, then
\[
\bbP \big( G \sim \bbG(n,1/2) \text{ is a } k\text{-Ramsey graph} \big) = 0,
\]
whence $n \ge R(k)$.

\medskip

{\bf (iii) $\Rightarrow$ (ii).} Now suppose $n \ge R(k)$. By Observation~\ref{obs:enum-form-x-0-1}, we find that the polynomial
\[
F({\bf x}) := \sum_{\fg \in \calA} {\tt sign}(\fg) \cdot x_{{\tt Edges}(\fg)}
\]
vanishes at every point ${\bf x} \in \{0,1\}^{\binom{[n]}{2}}$. Let us write
\[
F({\bf x}) = \sum_{\calU \subseteq \binom{[n]}{2}} a_\calU \cdot x_\calU,
\]
where
\[
a_\calU := \sum_{\substack{\fg \in \calA \\ {\tt Edges}(\fg) = \calU}} {\tt sign}(\fg).
\]
Lemma~\ref{l:F-sum-a-U-prod-zero} implies that $a_\calU = 0$ for every $\calU \subseteq \binom{[n]}{2}$.

Now observe that the polynomial $P_{n,k}(t) \in \mathbb{Z}[t]$ can be expressed as
\[
P_{n,k}(t) = \sum_{d \ge 0} \sum_{\substack{\fg \in \calA \\ |{\tt Edges}(\fg)| = d}} {\tt sign}(\fg) \cdot t^d = \sum_{d \ge 0} t^d \sum_{\calU \, : \, |\calU| = d} a_\calU.
\]
Therefore, $P_{n,k}$ is the zero polynomial.

\medskip

{\bf (ii) $\Rightarrow$ (i).} This implication is clear. We have finished the proof.

\subsection{Some remarks}\label{subsection:remarks-P-n-k}
We give some remarks about the polynomials $P_{n,k}(t) \in \mathbb{Z}[t]$.

\subsubsection{Matrix Formulation}\label{subsubsection: P-n-k-matrix}
It might be convenient to think about the polynomials $P_{n,k}(t)$ in terms of calculations with $\{0,1\}$-matrices. We start with the \defn{incidence matrix}
\begin{equation}\label{in:I-n-k}
{\tt I}(n,k) \in \{0,1\}^{\binom{[n]}{2} \times \binom{[n]}{k}},
\end{equation}
which is a $\{0,1\}$-matrix whose rows are indexed by $\binom{[n]}{2}$ and whose columns are indexed by $\binom{[n]}{k}$, where the $(e,S)$-entry of ${\tt I}(n,k)$ is $1$ if and only if $e \subseteq S$. The incidence matrix ${\tt I}(n,k)$ has the following properties:
\begin{itemize}
    \item its dimension is $\binom{n}{2} \times \binom{n}{k}$,
    \item each row has exactly $\binom{n-2}{k-2}$ ones, and
    \item each column has exactly $\binom{k}{2}$ ones.
\end{itemize}

Now for any $\{0,1\}$-matrices $A$ and $B$ of the same dimension, let us say $A \preceq B$ if for every $i,j$, we have $A_{ij} \le B_{ij}$. In particular, $\preceq$ defines a poset structure on the family $\{0,1\}^{\binom{[n]}{2} \times \binom{[n]}{k}}$ of $\{0,1\}$-matrices.

Let us define $\calA'$ to be the set of matrices
\[
M \in \{0,1\}^{\binom{[n]}{2} \times \binom{[n]}{k}}
\]
such that
\begin{itemize}
    \item[(i)] $M \preceq {\tt I}(n,k)$,
    \item[(ii)] every column of $M$ has at least $1$ one, and
    \item[(iii)] every column of $M$ has at most $\binom{k}{2} - 1$ ones.
\end{itemize}

For each $e \in \binom{[n]}{2}$, let $|M_{e, -}|$ denote the sum of all entries on row $e$. Furthermore, let $|M|$ denote the sum of all entries in $M$.

If we define
\[
{\tt Edges}(M) := \left\{ e \in \binom{[n]}{2} \, : \, |M_{e, -}| \ge 1 \right\},
\]
and
\[
{\tt sign}(M) := (-1)^{|M|},
\]
then it follows that
\begin{equation}\label{eq:P-n-k-t-edges-M}
P_{n,k}(t) = \sum_{M \in \calA'} {\tt sign}(M) \cdot t^{|{\tt Edges}(M)|}.
\end{equation}
This is simply a consequence of the one-to-one correspondence between $\calA$ and $\calA'$, under which $\fg \in \calA$ corresponds to a matrix $M \in \calA'$ which records the edges of graphs of $\fg$. (The rule is that $M_{e,S} = 1$ if and only if $e \in {\bf E}(\fg_S)$.) Note that under this correspondence, ${\tt Edges}(M) = {\tt Edges}(\fg)$ and ${\tt sign}(M) = {\tt sign}(\fg)$.

\subsubsection{Sign-reversing Involutions}\label{subsubsection: sign-reversing}
Throughout this subsubsection, let us fix positive integers $n \ge k \ge 2$. For each positive integer $m$, let $\calE_m \subseteq \calA'$ denote the set of all matrices $M \in \calA'$ with $|{\tt Edges}(M)| = m$. We note, from~\eqref{eq:P-n-k-t-edges-M}, that the $t^m$-coefficient of $P_{n,k}(t)$ is
\begin{equation}\label{eq:t-m-P-n-k-t-in-Z}
[t^m] \, P_{n,k}(t) = \sum_{\substack{M \in \calE_m}} (-1)^{|M|} \in \mathbb{Z}.
\end{equation}
While we do not investigate them deeply within the current paper, there seem to be many interesting {\em sign-reversing involutions}
\[
\sigma: E' \to E',
\]
defined on some subset $E' \subseteq \calE_m$.

Recall that a \defn{sign-reversing involution} $\sigma: E' \to E'$ is a map such that for every matrix $M \in E'$,
\begin{itemize}
    \item[(i)] $\sigma(\sigma(M)) = M$ (i.e., $\sigma$ is an involution), and
    \item[(ii)] ${\tt sign}(\sigma(M)) = -{\tt sign}(M)$ (i.e., $\sigma$ is sign-reversing).
\end{itemize}
The existence of such a sign-reversing involution immediately implies
\[
\sum_{M \in E'} (-1)^{|M|} = 0,
\]
which might be helpful in calculating the alternating summation in~\eqref{eq:t-m-P-n-k-t-in-Z}, and thus in calculating $P_{n,k}$.

Sign-reversing involutions have seen many uses in algebraic combinatorics. It has applications, for example, in the theory of partitions \cite{Pak03, GL10}, in the enumerative combinatorics of trees and parking functions \cite{CP10}, and in graph theory \cite{Gri21}, to name a few. We recommend {\em Chapter~2. Counting with Signs} of the recent book of Sagan's~\cite{Sag20} for a discourse on this technique.

Let us now illustrate with an example. Consider the subset $\calE^\circ \subseteq \calE_{\binom{n}{2}}$ of all matrices $M \in \calE_{\binom{n}{2}}$ in which every row has at least $1$ one and at most $\binom{n-2}{k-2} - 1$ ones. Let $\sigma: \calE^\circ \to \calE^\circ$ be given by
\[
\sigma(M) := {\tt I}(n,k) - M,
\]
where the subtraction on the right-hand side is entrywise.

\begin{proposition}\label{prop:k-2-3-4-n-k-odd-inv}
If $k \equiv 2 \text{ or } 3 \pmod{4}$ and $\binom{n}{k}$ is odd, then the map $\sigma$ is a well-defined sign-reversing involution on $\calE^\circ$.
\end{proposition}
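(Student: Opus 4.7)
The plan is to verify the three defining properties of a sign-reversing involution directly from the definitions, after first checking that $\sigma$ maps $\calE^\circ$ into itself. The involution property is immediate (since entrywise subtraction satisfies ${\tt I}(n,k) - ({\tt I}(n,k) - M) = M$), so the real content is in well-definedness and in computing the parity of $|{\tt I}(n,k)|$.

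For well-definedness, I would check the defining conditions of $\calA'$ and the extra constraints cutting out $\calE^\circ$. Since $M \preceq {\tt I}(n,k)$ has entries in $\{0,1\}$, so does $\sigma(M) = {\tt I}(n,k) - M$, and clearly $\sigma(M) \preceq {\tt I}(n,k)$. Each column of ${\tt I}(n,k)$ contains exactly $\binom{k}{2}$ ones, so a column of $M$ having between $1$ and $\binom{k}{2} - 1$ ones forces the corresponding column of $\sigma(M)$ to have between $\binom{k}{2} - (\binom{k}{2} - 1) = 1$ and $\binom{k}{2} - 1$ ones. The analogous calculation with row sums (using that each row of ${\tt I}(n,k)$ contains exactly $\binom{n-2}{k-2}$ ones) shows that the row condition for $\calE^\circ$ is preserved, and hence $|{\tt Edges}(\sigma(M))| = \binom{n}{2}$ as required.

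For the sign-reversing property, I would write
\[
{\tt sign}(\sigma(M)) = (-1)^{|{\tt I}(n,k)| - |M|} = (-1)^{|{\tt I}(n,k)|} \cdot {\tt sign}(M),
\]
so it suffices to show that $|{\tt I}(n,k)|$ is odd. Since each of the $\binom{n}{k}$ columns of ${\tt I}(n,k)$ contributes exactly $\binom{k}{2}$ ones, one has $|{\tt I}(n,k)| = \binom{k}{2}\binom{n}{k}$. The hypothesis $\binom{n}{k}$ odd handles one factor. For the other, the congruence $k \equiv 2$ or $3 \pmod 4$ is precisely the condition that $\binom{k}{2} = k(k-1)/2$ is odd: indeed $k(k-1)$ is always even, and $k(k-1)/2$ is odd iff exactly one of $k, k-1$ contributes a single factor of $2$, which happens iff $k \not\equiv 0, 1 \pmod 4$.

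I do not anticipate any real obstacle; the proposition is essentially a bookkeeping check once one writes down that $|{\tt I}(n,k)| = \binom{k}{2}\binom{n}{k}$ and recognizes the standard parity criterion for $\binom{k}{2}$. The only mild subtlety is making sure every clause of the definition of $\calE^\circ$ (including the two-sided bounds on row sums and column sums, and the condition $|{\tt Edges}(M)| = \binom{n}{2}$) is symmetric under complementation against ${\tt I}(n,k)$, which is exactly why the set $\calE^\circ$ was defined the way it was.
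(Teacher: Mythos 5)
Your proposal is correct and follows essentially the same route as the paper: verify closure of $\calE^\circ$ under $M \mapsto {\tt I}(n,k)-M$ via the column- and row-sum bounds, note the involution property is immediate, and reduce sign-reversal to the parity of $|{\tt I}(n,k)| = \binom{k}{2}\binom{n}{k}$. The only addition you make is spelling out the elementary fact that $k \equiv 2, 3 \pmod 4$ is equivalent to $\binom{k}{2}$ being odd, which the paper uses without comment.
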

\begin{proof}
Let us check that the map is well-defined. Let $M' := {\tt I}(n,k) - M$. Since $M \in \calE^\circ \subseteq \calA'$, we have that $M' = {\tt I}(n,k) - M \preceq {\tt I}(n,k)$, and on every column of $M'$ the number of ones is in $\left[ 1, \binom{k}{2} - 1 \right]$. This shows $M' \in \calA'$.

Now, observe that $|{\tt Edges}(M')| = \binom{n}{2}$, and on every row of $M'$ the number of ones is in $\left[ 1, \binom{n-2}{k-2} - 1 \right]$. This shows $M' \in \calE^\circ$, and therefore $\sigma: \calE^\circ \to \calE^\circ$ is a well-defined map.

That $\sigma$ is an involution is clear from its definition:
\[
\sigma(\sigma(M)) = {\tt I}(n,k) - \left( {\tt I}(n,k) - M \right) = M.
\]

Finally, note that
\[
{\tt sign}(\sigma(M)) = (-1)^{\binom{n}{k}\binom{k}{2} - |M|} = - {\tt sign}(M),
\]
where in the last step we use that $\binom{n}{k}$ and $\binom{k}{2}$ are both odd. Hence, $\sigma$ is sign-reversing.
\end{proof}

\begin{remark}
For every positive integer $k$ (and a fortiori every $k \equiv 2 \text{ or } 3 \pmod{4}$), there are infinitely many positive integers $n \ge k$ such that $\binom{n}{k}$ is odd. More precisely, Lucas's theorem (see e.g.~\cite{Fin47, Gra97}) implies that $\binom{n}{k}$ is odd if and only if when we express $n$ and $k$ in their binary representations, every binary bit of $n$ is at least the corresponding bit of $k$.
\end{remark}

From our discussions above, we conclude that $\sum_{M \in \calE^\circ} (-1)^{|M|} = 0$, and thus the coefficient of $t^{n(n-1)/2}$ in $P_{n,k}(t)$ can be expressed as a smaller summation
\begin{equation}\label{eq:sum-M-E-circ-smaller}
\sum_{M \in \calE_{n(n-1)/2} \setminus \calE^\circ} (-1)^{|M|},
\end{equation}
whenever $n$ and $k$ satisfy the number theoretic assumptions in Proposition~\ref{prop:k-2-3-4-n-k-odd-inv}.

\subsubsection{Vanishing Coefficients at Low Degrees}
For every $\fg \in \calA$, the set of edges of $\fg$ (whose definition we give in Subsubsection~\ref{subsubsection: P-n-k-formula}) is a subset of $\binom{[n]}{2}$. From this, we deduce that if $P_{n,k}(t)$ is not the zero polynomial its degree is at most $\frac{n(n-1)}{2}$. To say this differently, for any $m > \frac{n(n-1)}{2}$, we have $[t^m] \, P_{n,k}(t) = 0$.

We have a similar result at low degrees.

\begin{proposition}
For any nonnegative integer $m$ with
\begin{equation}\label{ineq:m-less-than-n-2}
m < \frac{n^2}{2(k-1)} - \frac{n}{2},
\end{equation}
we have $[t^m] \, P_{n,k}(t) = 0$.
\end{proposition}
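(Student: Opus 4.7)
The plan is to prove the contrapositive: if $[t^m] \, P_{n,k}(t) \neq 0$, then $m \geq \frac{n^2}{2(k-1)} - \frac{n}{2}$. The key observation is that the condition $\fg \in \calA$ forces every $k$-subset to use at least one edge, which translates a nonzero coefficient into a Turán-type covering constraint on the edge set.

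First, I would note that if $[t^m] \, P_{n,k}(t) \neq 0$, then the set of $\fg \in \calA$ with $|{\tt Edges}(\fg)| = m$ is nonempty, so we may fix such a $\fg$ and let $E := {\tt Edges}(\fg) \subseteq \binom{[n]}{2}$. By definition of $\calA$, for every $S \in \binom{[n]}{k}$, the graph $\fg_S$ is not the empty graph $\ol{K_S}$, so it has at least one edge $e \in \mathbf{E}(\fg_S) \subseteq E$ with $e \subseteq S$. In other words, every $k$-subset $S$ contains at least one edge from $E$.

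Second, I would reformulate this as a statement about the complement. Consider the graph $\Gamma$ on vertex set $[n]$ whose edge set is $\binom{[n]}{2} \setminus E$. The covering condition above says that no $S \in \binom{[n]}{k}$ satisfies $\binom{S}{2} \subseteq \binom{[n]}{2} \setminus E$, i.e., $\Gamma$ contains no $K_k$. Turán's theorem then yields the upper bound
\[
|\mathbf{E}(\Gamma)| \;=\; \binom{n}{2} - m \;\leq\; \left(1 - \frac{1}{k-1}\right) \frac{n^2}{2}.
\]
Rearranging and simplifying gives exactly
\[
m \;\geq\; \binom{n}{2} - \frac{(k-2) n^2}{2(k-1)} \;=\; \frac{n^2}{2(k-1)} - \frac{n}{2},
\]
contradicting~\eqref{ineq:m-less-than-n-2}.

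There should be no real obstacle here: the argument is a short translation of the definition of $\calA$ into a $K_k$-free condition on the complementary edge set, followed by a direct application of Turán's theorem. The only thing worth double-checking is the arithmetic simplification of $\binom{n}{2} - \left(1 - \frac{1}{k-1}\right)\frac{n^2}{2}$ to the stated threshold, which one verifies by writing $1 - \frac{1}{k-1} = \frac{k-2}{k-1}$ and expanding.
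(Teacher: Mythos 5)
Your proof is correct and follows essentially the same route as the paper: translate the non-emptiness condition on each $\fg_S$ into the statement that the complement of ${\tt Edges}(\fg)$ is $K_k$-free, then apply Tur\'{a}n's theorem and rearrange. The only cosmetic difference is that you phrase it as a contrapositive, whereas the paper directly bounds $|{\tt Edges}(\fg)|$ from below for every $\fg \in \calA$.
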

\begin{proof}
This is a consequence of Tur\'{a}n's theorem (see e.g. \cite[Ch.~1]{Zha23}). Take an arbitrary assignment $\fg \in \calA$. Consider an auxiliary graph $G = G(\fg)$ in which ${\bf V}(G) = [n]$ and ${\bf E}(G) = {\tt Edges}(\fg)$. Note from the definition of $\calA$ (see Subsubsection~\ref{subsubsection: P-n-k-formula}) that for any $S \in \binom{[n]}{k}$, there exists at least one edge $e \in {\tt Edges}(\fg)$ such that $e \subseteq S$. This implies that the complement graph $\ol{G}$ does not contain a clique of size $k$. Hence, by Tur\'{a}n's theorem,
\[
|{\bf E}(\ol{G})| \le \left( 1 - \frac{1}{k-1} \right) \frac{n^2}{2},
\]
and thus $|{\bf E}(G)| \ge \frac{n^2}{2(k-1)} - \frac{n}{2}$. This means that for every $\fg \in \calA$, we have
\[
|{\tt Edges}(\fg)| \ge \frac{n^2}{2(k-1)} - \frac{n}{2},
\]
which gives a lower bound on the exponent in~\eqref{eq:P-n-k-t-defn}. The desired result follows.
\end{proof}

The proposition above shows that $P_{n,k}(t)$ always has a factor $t^{\left\lceil a \right\rceil}$, where $a$ is the upper bound on the right-hand side of~\eqref{ineq:m-less-than-n-2}. In particular, $P_{n,k}(0) = 0$, for any $n \ge k \ge 2$.

\subsection{Enumerating Cliques and Anticliques}
Before we end this section, we present another use of the function $f({\bf x}|_S)$, as defined in~\eqref{eq:f-1-prod-prod}, for counting cliques and anticliques.

Throughout this subsection, we fix ${\bf x} \in \{0,1\}^{\binom{[n]}{2}}$, which corresponds to a graph $G$ on the vertex set $[n]$.

To describe the result of this subsection, we define the notion of {\em effective vertices} as follows. Suppose that $H$ is a graph on the vertex set $[n]$. We say that a vertex $V \in [n]$ if \defn{effective} if it is incident to at least one edge in ${\bf E}(H)$. In other words, $V$ is effective if its graph-theoretic degree ${\rm deg}(V)$ is at least $1$. We denote the set of effective vertices of $H$ by ${\bf V}^{\tt eff}(H)$.

For any nonnegative integers $a$ and $b$, let $\phi(a,b;G)$ denote the number of graphs $H$ on the vertex set $[n]$ such that
\begin{itemize}
    \item[(i)] ${\bf E}(H) \subseteq {\bf E}(G)$ (i.e., $H$ is a subgraph of $G$),
    \item[(ii)] $|{\bf E}(H)| = a$ (i.e., $H$ has exactly $a$ edges), and
    \item[(iii)] $|{\bf V}^{\tt eff}(H)| = b$ (i.e., $H$ has exactly $b$ effective vertices).
\end{itemize}
Let $\Phi(G)$ denote the total number of $k$-cliques and $k$-anticliques in $G$; in other words,
\[
\Phi(G) := \#\left\{ S \in \binom{[n]}{k} \, : \, G_S \text{ is a clique or an anticlique} \right\},
\]
where $G_S$ denotes the induced subgraph of $G$ on the vertex set $S$.

We have the following counting formula.

\begin{proposition}
If $k \equiv 2 \text{ or } 3 \pmod{4}$, we have
\[
\Phi(G) = \sum_{a=0}^{\binom{k}{2} - 1} \sum_{b = 0}^k (-1)^a \binom{n-b}{k-b} \phi(a,b;G).
\]
\end{proposition}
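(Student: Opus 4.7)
The plan is to combine the expansion of $f({\bf x}|_S)$ from Equation~\eqref{eq:f-x-S-sum-U-1-U} with Observation~\ref{obs:enum-form-x-0-1}. By that observation, $f({\bf x}|_S) = 0$ precisely when $G_S$ is a clique or an anticlique, and $f({\bf x}|_S) = 1$ otherwise. Hence $1 - f({\bf x}|_S)$ is the indicator of the event that $G_S$ is a clique or anticlique, so
\[
\Phi(G) = \sum_{S \in \binom{[n]}{k}} \bigl( 1 - f({\bf x}|_S) \bigr).
\]

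Next, I would use the parity assumption. Since $k \equiv 2$ or $3 \pmod{4}$, the number $\binom{k}{2}$ is odd. Starting from $1 - f({\bf x}|_S) = \prod_{e \in \binom{S}{2}} x_e + \prod_{e \in \binom{S}{2}}(1 - x_e)$ and expanding the second product, the maximal-degree monomial $x_{\binom{S}{2}}$ comes with coefficient $(-1)^{\binom{k}{2}} = -1$ and cancels the first term, leaving
\[
1 - f({\bf x}|_S) = \sum_{\calU \subsetneq \binom{S}{2}} (-1)^{|\calU|} x_\calU.
\]
(Equivalently, this follows from Equation~\eqref{eq:f-x-S-sum-U-1-U} by separating out the empty-$\calU$ term and flipping the sign.)

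I would then swap the order of summation between $S$ and $\calU$. For a fixed edge set $\calU \subseteq \binom{[n]}{2}$, let $V(\calU) \subseteq [n]$ denote the set of vertices incident to some edge of $\calU$. The condition $\calU \subseteq \binom{S}{2}$ (with $|\calU| < \binom{k}{2}$) is exactly $V(\calU) \subseteq S$, so the number of admissible $S \in \binom{[n]}{k}$ is $\binom{n - |V(\calU)|}{k - |V(\calU)|}$. Moreover, $x_\calU \in \{0,1\}$ with $x_\calU = 1$ iff $\calU \subseteq {\bf E}(G)$; such edge sets $\calU$ are in natural bijection with subgraphs $H \subseteq G$ via $\calU = {\bf E}(H)$, under which $|\calU|$ becomes $|{\bf E}(H)|$ and $V(\calU)$ becomes ${\bf V}^{\tt eff}(H)$.

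Putting these together and grouping the sum over $\calU$ by $a = |\calU|$ and $b = |V(\calU)|$ yields the stated formula, since the number of $\calU$'s with those parameters equals $\phi(a,b;G)$. There is no real obstacle: the only nontrivial ingredient is the cancellation of the top-degree monomial in $1 - f({\bf x}|_S)$, which is precisely what the hypothesis $k \equiv 2,3 \pmod{4}$ delivers; without it, an additional $(-1)^{\binom{k}{2}} x_{\binom{S}{2}}$ term would survive and the tidy range $a \le \binom{k}{2} - 1$ would need to be modified.
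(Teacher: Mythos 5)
Your proposal is correct and follows essentially the same route as the paper: write $\Phi(G) = \sum_S (1 - f({\bf x}|_S))$, expand $1 - f$ as the alternating sum over proper subsets $\calU \subsetneq \binom{S}{2}$ (using the oddness of $\binom{k}{2}$ to cancel the top-degree term), restrict to $\calU \subseteq {\bf E}(G)$ via $x_\calU$, swap the order of summation, and count the admissible $S$ by the effective-vertex set. The only cosmetic difference is that the paper names the inner count $\psi(H)$ and spells out that $\psi(H) = 0$ when $|{\bf E}(H)| \ge \binom{k}{2}$, which you fold into the parenthetical restriction $|\calU| < \binom{k}{2}$.
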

\begin{proof}
In Observation~\ref{obs:enum-form-x-0-1}, we noted that $f({\bf x}|_S) = 0$ if and only if $G_S$ is either a clique or an anticlique, and that $f({\bf x}|_S) = 1$ otherwise. Then we argued that $\prod_{S \in \binom{[n]}{k}} f({\bf x}|_S) = 1$ if and only if $G$ is a $k$-Ramsey graph, and that the product is $0$ otherwise.

Now we use the same function $f$ in a different way. Note that $1 - f({\bf x}|_S) = 1$ if and only if $G_S$ is either a clique or an anticlique. Therefore,
\[
\Phi(G) = \sum_{S \in \binom{[n]}{k}} (1 - f({\bf x}|_S)).
\]
From~\eqref{eq:f-x-S-sum-U-1-U}, we have the formula
\[
1 - f({\bf x}|_S) = \sum_{\calU \subsetneq \binom{S}{2}} (-1)^{|\calU|} x_\calU.
\]
Therefore,
\[
\Phi(G) = \sum_{S \in \binom{[n]}{k}} \sum_{\calU \subsetneq \binom{S}{2}} (-1)^{|\calU|} x_\calU.
\]
Now we note that whenever $\calU$ contains an edge $e \notin {\bf E}(G)$, the product $x_\calU$ is zero. Thus, we can add the restriction $\calU \subseteq {\bf E}(G)$ to the inner summation. Thinking of $\calU$ as the edge set of a subgraph $H$ of $G$, and switching the order of summations, we obtain
\[
\Phi(G) = \sum_H (-1)^{|{\bf E}(H)|} \sum_{\substack{S \in \binom{[n]}{k} \\ {\bf E}(H) \subsetneq \binom{S}{2}}} 1,
\]
where the outer summation is over all graphs $H$ on $[n]$ such that ${\bf E}(H) \subseteq {\bf E}(G)$. Now let us consider the inner summation, which for convenience let us denote by $\psi(H)$.

Observe that $\psi(H)$ counts the number of $S \in \binom{[n]}{k}$ such that ${\bf E}(H) \subsetneq \binom{S}{2}$. Hence, if $|{\bf E}(H)| \ge \binom{k}{2}$, then $\psi(H)$ is zero. Now, if $|{\bf E}(H)| \le \binom{k}{2} - 1$, then ${\bf E}(H) \subsetneq \binom{S}{2}$ holds if and only if $S$ contains all the effective vertices of $H$. Moreover, if $|{\bf V}^{\tt eff}(H)| = b$, then the number of such $S \in \binom{[n]}{k}$ is $\binom{n-b}{k-b}$. (Here, we use the convention that $\binom{n-b}{k-b} = 0$ if $b > k$.) From this discussion, we find
\[
\Phi(G) = \sum_{a=0}^{\binom{k}{2} - 1} \sum_{b=0}^k \sum_H (-1)^a \binom{n-b}{k-b},
\]
where the innermost summation is over all graphs $H$ on the vertex set $[n]$ such that (i)~${\bf E}(H) \subseteq {\bf E}(G)$, (ii)~$|{\bf E}(H)| = a$, and (iii)~$|{\bf V}^{\tt eff}(H)| = b$. Note that there are exactly $\phi(a,b;G)$ such graphs, by design. We have finished the proof.
\end{proof}

\bigskip

\section{The fourth formula}\label{s:proof-Q-n-k}
In this section, we give a proof of Theorem~\ref{thm:Q-n-k}.

\subsection{Proof of Theorem~\ref{thm:Q-n-k}}
As before, a graph on the vertex set $[n]$ is represented by an assignment ${\bf x} \in \{0,1\}^{\binom{[n]}{2}}$, ${\bf x}: e \mapsto x_e$. Let us perform a change of variables. We introduce ${\bf y} \in \{-1,1\}^{\binom{[n]}{2}}$, ${\bf y}: e \mapsto y_e$, given simply by $y_e := 2x_e - 1$ for every edge $e \in \binom{[n]}{2}$. Note that it is easy to go between ${\bf x}$ and ${\bf y}$, and so below we might think of ${\bf x}$ as a function of ${\bf y}$, and vice versa.

The main idea of the proof is to apply Lemma~\ref{l:sum-prod-f-A-S} with the function
\begin{equation}\label{eq:f-x-S-1-y-e}
f\!\left( {\bf x}|_S \right) = 2^{\binom{k}{2}} - \prod_{e \in \binom{S}{2}} (1+y_e) - \prod_{e \in \binom{S}{2}} (1-y_e),
\end{equation}
the ring $\calR = \mathbb{Z}$, and the subset $P = \mathbb{Z}_{\ge 1}$ (cf. our argument at the beginning of Subsection~\ref{subsection:proof-thm-P-n-k}).

In a similar fashion to Observation~\ref{obs:enum-form-x-0-1}, we obtain an enumeration formula from this function $f$ as follows. We have
\begin{equation}\label{eq:2-binom-k-2-n-k-enum}
2^{\binom{k}{2}\binom{n}{k}} \cdot \#\left\{k\text{-Ramsey graphs on }[n]\right\} = \sum_{{\bf y}} \prod_{S \in \binom{[n]}{k}} f({\bf x}|_S),
\end{equation}
where the summation is over all ${\bf y} \in \{-1,1\}^{\binom{[n]}{2}}$.

Observe that the expression in~\eqref{eq:f-x-S-1-y-e}, after we expand the products on the right-hand side, can be expressed as $2^{\binom{k}{2}} - 2 - 2 \sum y_\calU$, where the summation is over all subsets $\calU \subseteq \binom{S}{2}$ such that $\varnothing \neq \calU \subseteq \binom{S}{2}$ and $|\calU|$ is even. Here, we use the shorthand $y_\calU := \prod_{e \in \calU} y_e$.

For convenience, let us write $\tau := 1 - 2^{\binom{k}{2} - 1}$. Combining the discussion from the previous paragraph and~\eqref{eq:2-binom-k-2-n-k-enum}, we find that
\begin{equation}\label{eq:-1-binom-n-k-tau}
(-1)^{\binom{n}{k}} 2^{\left( \binom{k}{2} - 1 \right) \binom{n}{k}} \cdot \#\left\{k\text{-Ramsey graphs on }[n]\right\} = \sum_{{\bf y}} \prod_S \left( \left( \sum_\calU y_\calU \right) + \tau \right),
\end{equation}
where
\begin{itemize}
    \item the outer summation is over all ${\bf y} \in \{-1,1\}^{\binom{[n]}{2}}$,
    \item the product is over all $S \in \binom{[n]}{k}$, and
    \item the inner summation is over all nonempty subsets $\calU \subseteq \binom{S}{2}$ such that $|\calU|$ is even.
\end{itemize}

Let us now define the following combinatorial object, which is going to be helpful when we expand the product on the right-hand side of \eqref{eq:-1-binom-n-k-tau}. We define $\whcB$ to be the set of assignments $\fh$ which associate to each $S \in \binom{[n]}{k}$ a graph $\fh_S$ on the vertex set $S$ such that $|{\bf E}(\fh_S)|$ is an even nonnegative integer. Note that the set $\whcB$ is a superset of the set $\calB$, which we defined at the beginning of Subsubsection~\ref{subsubsection: Q-n-k-formula}. The notion of {\em empty locus}, which we earlier defined on $\calB$, generalizes to $\whcB$: for each $\fh \in \whcB$, we define the \defn{empty locus} of $\fh$ as (cf.~\eqref{eq:tt-Empty-fh})
\[
{\tt Empty}(\fh) := \left\{ S \in \binom{[n]}{k} \, : \, \fh_S \text{ is an empty graph} \right\}.
\]

Now we expand the product on the right-hand side of \eqref{eq:-1-binom-n-k-tau}. Using the convention that $y_{\varnothing} = 1$, we have
\[
\prod_S \left( \left( \sum_\calU y_\calU \right) + \tau \cdot y_{\varnothing} \right) = \sum_{\fh \in \whcB} \tau^{|{\tt Empty}(\fh)|} \prod_{S \in \binom{[n]}{k}} y_{{\bf E}(\fh_S)}.
\]

Therefore,
\begin{equation}\label{eq:sum-bf-y-prod-empty}
\sum_{{\bf y}} \prod_S \left( \left( \sum_\calU y_\calU \right) + \tau \cdot y_{\varnothing} \right) = \sum_{\fh \in \whcB} \tau^{|{\tt Empty}(\fh)|} \sum_{{\bf y}} \prod_{S \in \binom{[n]}{k}} y_{{\bf E}(\fh_S)}.
\end{equation}

Since the inner summation on the right-hand side of~\eqref{eq:sum-bf-y-prod-empty} is over all ${\bf y} \in \{-1, 1\}^{\binom{[n]}{2}}$, we see that
\[
\sum_{{\bf y}} \prod_{S \in \binom{[n]}{k}} y_{{\bf E}(\fh_S)} = 0
\]
whenever there exists an edge $e' \in \binom{[n]}{2}$ for which the highest power of $y_{e'}$ which divides the monomial $\prod_{S \in \binom{[n]}{k}} y_{{\bf E}(\fh_S)}$ has an odd exponent; in other words, the summation is zero if there exists $e'$ such that the integer $\varpi_{e'}$ is odd in the expansion
\[
\prod_{S \in \binom{[n]}{k}} y_{{\bf E}(\fh_S)} = \prod_{e \in \binom{[n]}{2}} y_e^{\varpi_e}.
\]
Here, for each $e \in \binom{[n]}{2}$, the integer $\varpi_e$ has the formula
\[
\varpi_e = \#\left\{ S \in \binom{[n]}{k} \, : \, e \in {\bf E}(\fh_S) \right\}.
\]

This means that the contribution to the summation on the right-hand side of~\eqref{eq:sum-bf-y-prod-empty} from the term with $\fh \in \whcB$ is zero whenever $\fh \in \whcB - \calB$. Therefore,
\begin{equation}\label{eq:RHS-B-hat-B}
{\rm RHS}_{\eqref{eq:sum-bf-y-prod-empty}} = \sum_{\fh \in \calB} \tau^{|{\tt Empty}(\fh)|} \sum_{{\bf y}} \prod_{S \in \binom{[n]}{k}} y_{{\bf E}(\fh_S)}.
\end{equation}
On the other hand, for any $\fh \in \calB$ and for any ${\bf y} \in \{-1, 1\}^{\binom{[n]}{2}}$, we have
\[
\prod_{S \in \binom{[n]}{k}} y_{{\bf E}(\fh_S)} = 1.
\]

Now we combine the last observation with~\eqref{eq:-1-binom-n-k-tau}--\eqref{eq:RHS-B-hat-B} and the definition of $Q_{n,k}$ from~\eqref{eq:Q-n-k-t-defn} to obtain
\[
(-1)^{\binom{n}{k}} 2^{\left( \binom{k}{2} - 1 \right) \binom{n}{k}} \cdot \#\left\{k\text{-Ramsey graphs on }[n]\right\} = 2^{\binom{n}{2}} \cdot Q_{n,k}(\tau),
\]
which implies Theorem~\ref{thm:Q-n-k}, from which Corollary~\ref{cor:Q-n-k} is immediate.

\subsection{Matrix Formulation}
Just as in Subsubsection~\ref{subsubsection: P-n-k-matrix}, we can give a matrix formulation for the polynomials $Q_{n,k}$. We start with the same incidence matrix ${\tt I}(n,k)$ as in~\eqref{in:I-n-k}, and let $\preceq$ denote the same partial order on $\{0,1\}^{\binom{[n]}{2} \times \binom{[n]}{k}}$ as before (see Subsubsection~\ref{subsubsection: P-n-k-matrix}).

For each matrix $M \in \{0,1\}^{\binom{[n]}{2} \times \binom{[n]}{k}}$, for each $e \in \binom{[n]}{2}$, and for each $S \in \binom{[n]}{k}$, let
\[
|M_{e,-}| := \sum_{S \in \binom{[n]}{k}} M_{e,S},
\]
and
\[
|M_{-,S}| := \sum_{e \in \binom{[n]}{2}} M_{e,S}.
\]

Let $\calB'$ denote the set of all matrices $M \preceq {\tt I}(n,k)$ such that for any $e \in \binom{[n]}{2}$ and for any $S \in \binom{[n]}{k}$, the integers $|M_{e,-}|$ and $|M_{-,S}|$ are even. For each $M \in \calB'$, define
\[
{\tt Empty}(M) := \#\left\{ S \in \binom{[n]}{k} \, : \, |M_{-,S}| = 0 \right\}.
\]
Then the formula
\begin{equation}\label{eq:Q-n-k-t-matrix-M}
Q_{n,k}(t) = \sum_{M \in \calB'} t^{|{\tt Empty}(M)|}
\end{equation}
follows by design, since we have the simple one-to-one correspondence which maps each $\fh \in \calB$ to $M \in \calB'$ according to the following rule: $M_{e,S} = 1$ if and only if $e \in {\bf E}(\fh_S)$ (see Subsubsection~\ref{subsubsection: Q-n-k-formula}).

Note that every coefficient of $Q_{n,k}(t)$ is a non-negative integer. The polynomial $Q_{n,k}(t)$ has degree exactly $\binom{n}{k}$ with leading coefficient $1$, since there exists a unique matrix $M \in \calB'$ with ${\tt Empty}(M)$ of {\em maximum} size $\binom{n}{k}$: the zero matrix.

\bigskip

\section{Ideas for Further Investigation}\label{sec:ideas-for-further-investigation}
In this section, we describe some ideas for further investigation.

\subsection{Properties of \texorpdfstring{$P_{n,k}$}{Pnk} and \texorpdfstring{$Q_{n,k}$}{Qnk}}
In this work, we have defined the univariate polynomials $P_{n,k}(t)$ and $Q_{n,k}(t)$, and shown that they are related to diagonal Ramsey numbers. These two families of polynomials might be interesting on their own, even outside the context of Ramsey theory. For example, what can we say further about their algebraic properties?

\subsection{Sign-Reversing Involutions}
Proposition~\ref{prop:k-2-3-4-n-k-odd-inv} gives one example of a sign-reversing involution from which we deduce the formula in~\eqref{eq:sum-M-E-circ-smaller} for computing a coefficient in $P_{n,k}(t)$ under some number theoretic assumptions. As we mentioned in Subsubsection~\ref{subsubsection: sign-reversing}, there seem to be many interesting sign-reversing involutions which might help simplify computing $P_{n,k}(t)$. Similar techniques might also help with computing the summations in our first, second, and fourth formulas as well.

\subsection{Erd\H{o}s--R\'{e}nyi Random Graphs}
Theorems~\ref{thm:P-n-k} and~\ref{thm:Q-n-k} give formulas for the probability that a uniformly random graph $G \sim \bbG(n,1/2)$ is $k$-Ramsey in terms of values of $P_{n,k}(t)$ and $Q_{n,k}(t)$. It might be interesting to find formulas with similar algebraic flavors for the analogous probability when $G$ follows $\bbG(n,p)$ with a general probability parameter $p \in [0,1]$.

\subsection{Off-diagonal Ramsey numbers and Other Variants}
We have focused on the diagonal Ramsey numbers $R(k)$. It might be interesting to find similar formulas for off-diagonal Ramsey numbers $R(k,\ell)$ as well. For a survey on $R(k,\ell)$, see~\cite{Rad21}. There are also many variants of the Ramsey problem. For instance, as many as the number of times the word {\em hypergraph} appears in our current work, we did not focus on the {\em hypergraph Ramsey numbers} (see e.g.~\cite{CFS15}). Would there be some elegant algebraic formulas one could find for other variants of the Ramsey problem?

\subsection{Using Lemma~\ref{l:sum-prod-f-A-S} with Other Commutative Rings}
Lemma~\ref{l:sum-prod-f-A-S} in Section~\ref{sec:general-algebraic} provides a rather general framework to obtain formulas for diagonal Ramsey numbers. Perhaps one could apply the lemma to different commutative rings $\calR$ to obtain further connections between Ramsey numbers and other algebraic objects.

\subsection{A Bivariate Polynomial}
Consider the following bivariate polynomial
\[
B_{n,k}(z,w) := \sum_{G,H} z^{|{\bf E}(H)|} w^{{\tt i}(G,H)} \in \mathbb{Z}[z,w],
\]
where the summation is over all graphs $G$ and all $k$-uniform hypergraphs $H$ on the vertex set $[n]$. (Recall that the definitions of ${\bf E}$ and ${\tt i}$ were given in Section~\ref{sec:defns-main-results}.) In light of Theorem~\ref{thm:cos-incidence}, we know that $B_{n,k}(z,w)$ is related to diagonal Ramsey numbers. On the other hand, this polynomial appears interesting on its own, as an object in algebraic combinatorics. For example, it is easy to see that the total degree of $B_{n,k}(z,w)$ is $\binom{n}{k} + \binom{n}{k}\binom{k}{2}$ (when $k = 2$, this is \href{https://oeis.org/A002378}{A002378} on~\cite{OEIS}; when $k = 3$, \href{https://oeis.org/A210440}{A210440}), that $B_{n,k}(1,1) = 2^{\binom{n}{2} + \binom{n}{k}}$ (when $k = 2$, \href{https://oeis.org/A053763}{A053763}; when $k = 3$, \href{https://oeis.org/A125791}{A125791}), and that $B_{n,k}(0,w) = 2^{\binom{n}{2}}$ (\href{https://oeis.org/A006125}{A006125}). What are some other interesting things we can say about this polynomial?

We have seen in Theorem~\ref{thm:cos-incidence} that the zero locus of $B_{n,k}(z,w)$ is related to Ramsey numbers. Let us ask: what are some interesting zero-free regions of $B_{n,k}$ (when considered as a holomorphic function $B_{n,k}: \mathbb{C}^2 \to \mathbb{C}$ in two variables)?

\bigskip

\section*{Acknowledgments}
P.J. was supported by Elchanan Mossel's Vannevar Bush Faculty Fellowship ONR-N00014-20-1-2826 and by Elchanan Mossel's Simons Investigator award (622132). I thank Sammy Luo and Elchanan Mossel for insightful discussions. I thank Sorawee Porncharoenwase for useful programming help. I thank Wijit Yangjit for his words of encouragement. I thank Julian Sahasrabudhe for giving an inspiring colloquium talk at MIT on the leap day of 2024.

\bibliographystyle{alpha}
\bibliography{ref}
\end{document}